\documentclass[11pt,draftcls,onecolumn]{IEEEtran}

\usepackage{amsthm}
\usepackage{amsfonts}
\usepackage{amsmath}
\usepackage{amssymb}
\usepackage{makeidx}
\usepackage{epsfig}
\usepackage{graphicx}
\usepackage{color}
\usepackage{cite}
\usepackage{url}

\bibliographystyle{plain}

\usepackage[ruled,vlined]{algorithm2e}
\usepackage{cite} 	
\usepackage{url}

\usepackage[rgb,letterpaper]{xcolor}
\usepackage{algorithmic}
\usepackage{cite}
\usepackage[letterpaper]{psfrag}
\usepackage{tikz}
\usepackage[process=auto,crop=pdfcrop]{pstool}

\newtheorem{theorem}{Theorem}
\newtheorem{proposition}{Proposition}
\newtheorem{lemma}{Lemma}
\newtheorem{corollary}{Corollary}
\newtheorem{remark}{Remark}
\newtheorem{ex}{Example}
\newtheorem{definition}{Definition}

\newcommand{\switcheddelay }{{linear feedback system with switched delays}}

\newcommand{\re}{{\mathbb R}}

\newcommand{\n}{{\mathbb N}}
\newcommand{\ca}{{\cos{(\alpha)}}}
\newcommand{\sa}{{\sin{(\alpha)}}}

\newcommand{\spann}{{\mbox{span}}}

\newcommand{\cq}{{\mathbb{Q}}}


%
%
%

%
\ifCLASSINFOpdf
\else
\fi
%
%

\begin{document}

\title{Modeling, analysis and design of linear systems with switching delays
\author{Rapha\"el M. Jungers$^1$, Alessandro D'Innocenzo$^2$ and Maria D. Di Benedetto$^2$}
\thanks{$^1$ICTEAM Institute, Universit\'e catholique de Louvain, Louvain-la-Neuve, Belgium. Email: {\texttt{raphael.jungers@uclouvain.be}}}
\thanks{$^2$Department of Electrical and Information Engineering, Center of Excellence DEWS, University of L'Aquila, Italy. Email: {\texttt{alessandro.dinnocenzo@univaq.it}}, {\texttt{mariadomenica.dibenedetto@univaq.it}}}
\thanks{The research leading to these results has received funding from the European Union Seventh Framework Programme [FP7/2007-2013] under grant agreement n257462 HYCON2 Network of excellence. R.J. is supported by the Communaut\'e francaise de Belgique - Actions de Recherche Concert\'ees, and by the Belgian Programme on Interuniversity Attraction Poles initiated by the Belgian Federal Science Policy Office.  R.J. is a F.R.S.-FNRS Research Associate.}
}

\maketitle


\begin{abstract}
We consider the modeling, stability analysis and controller design problems for discrete-time LTI systems with state feedback, when the actuation signal is subject to switching propagation delays, due to e.g. the routing in a multi-hop communication network. We show how to model these systems as regular switching linear systems and, as a corollary, we provide an (exponential-time) algorithm for robust stability analysis.  \rmj{We also show} that the general stability analysis problem is NP-hard in general. Even though the systems studied here are inherently switching systems, we show that their particular structure allows for analytical understanding of the dynamics, and \rmj{even} efficient algorithms for some problems: for instance, we give an algorithm that computes in a finite number of steps the minimal look-ahead knowledge of the delays necessary to achieve controllability. We finally show that when the switching signal cannot be measured it can be necessary to use nonlinear controllers for stabilizing a linear plant.
\end{abstract}


\section{Introduction}\label{secIntro}

\begin{figure}[t]
\begin{center}
\includegraphics[width=0.5\textwidth]{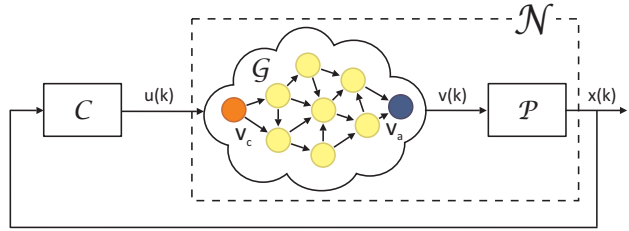}
\caption{State feedback control scheme of a multi-hop control network.}\label{figNetworkScheme}
\end{center}
\end{figure}

Wireless networked control systems are spatially distributed control systems where the communication between sensors, actuators, and computational units is supported by a wireless multi-hop communication network. The main motivation for studying such systems is the emerging use of wireless technologies for control systems (see e.g. \cite{akyildiz_wireless_2004}, \cite{SongIECON2010} and references therein) and the recent development of wireless industrial control protocols (such as WirelessHART and ISA-100). Although the use of wireless networked control systems offers many advantages with respect to wired architectures (e.g. flexible architectures, reduced installation/debugging/diagnostic/maintenance costs), their use is a challenge when one has to take into account the joint dynamics of the plant and of the communication protocol. Recently, a huge effort has been made in scientific research on Networked Control Systems (NCS), (see e.g.~\cite{Zhang2001},~\cite{WalshCSM2001},~\cite{SpecialIssueNCS2004},~\cite{MA_TAC2004},~\cite{HLA_TAC2009},~\cite{Andersson:CDC05},~\cite{MurrayTAC2009},~\cite{Hespanha2007},\cite{HeemelsTAC10}) and on the interaction between control systems and communication protocols (see e.g.~\cite{Astrom97j1},~\cite{walsh_stability_2002},~\cite{yook_trading_2002},~\cite{TabbaraCDC2007},~\cite{TabbaraTAC2007}). In general, the literature on NCSs addresses non--idealities (e.g. quantization errors, packets dropouts, variable sampling and delay, communication constraints) as aggregated network performance variables or disturbances, neglecting the dynamics introduced by the communication protocols. In~\cite{Andersson:CDC05}, a simulative environment of computer nodes and communication protocols interacting with the continuous-time dynamics of the real world is presented. To the best of our knowledge, the first integrated framework for analysis and co-design of network topology, scheduling, routing and control in a wireless multi-hop control network has been presented in \ad{\cite{AlurTAC11,PappasTAC2011,DInnocenzoTAC13}, where switching systems are used as a unifying formalism for control algorithms and communication protocols and sufficient conditions for stabilizing the plant are provided.} \ad{In particular, in \cite{DInnocenzoTAC13} the networked system is sampled at the time scale of the period of the communication protocol scheduling: this makes the system discrete-time linear time-invariant. In this paper we refine the model and consider a networked system sampled at the more accurate time scale of the transmission slots of each communication node: this makes the system discrete-time switching linear, which is a much more difficult mathematical framework. The aim of this paper is providing exact necessary and sufficient conditions for stability analysis and controllability.}

We assume in our model that a multi-hop network $\mathcal G$ provides the interconnection between a state-feedback discrete-time controller $\mathcal C$ and a discrete-time LTI plant $\mathcal P$ (see Figure \ref{figNetworkScheme}). The network $\mathcal G$ consists of an acyclic graph where the node $v_c$ is directly connected to the controller and the node $v_u$ is directly connected to the actuator of the plant. As classically done in (wireless) multi-hop networks to improve robustness with respect to node failures we assume that the number of paths that interconnect $\mathcal C$ to $\mathcal P$ is greater than one and that for any actuation data sent from the controller to the plant a unique path is chosen. Each path is characterized by a delay in forwarding the data (see \cite{DInnocenzoTAC13} for details), as a consequence each actuation data will be delayed of a finite number of time steps according to the chosen routing. Since this choice usually depends on the internal status of the network, i.e. because of node and/or link failures, we consider the choice of the routing path as an external disturbance and address both the cases when it is measurable or not: as a consequence our system is characterized by switching time-varying delays of the input signal.

Systems with time-varying delays have attracted increasing attention in recent years (see e.g. \cite{LiuCTA2006}, \cite{HetelCDC2007}, \cite{HeemelsTAC2010}, \cite{ShaoTAC2011}, \cite{HetelIJC2012} and references therein). In \cite{HetelSCL2011} it is assumed that the time-varying delay is approximatively known and numerical methods are proposed to exploit this partial information for adapting the control law in real time. \ad{Our modeling choice is close to the framework in \cite{HetelCDC2007}. However, in that work the delay is determined when the actuator receives the control packet rather than, as we assume in this paper, when the controller emits a control packet.} \rmj{This makes our model slighlty differ from these settings.  It is in our view more realistic because} due to the routing, control commands generated at different times can reach the actuator simultaneously, their arrival time can be inverted, and it is even possible that at certain times no control commands arrive to the actuator.\\ \rmj{The} LMI based design procedures \rmj{that} have been developed for switching systems with time-varying delays (see e.g. \cite{HetelTAC2006} and \cite{ZhangCTA2008}) \rmj{do not take into account} the specific structure of the systems induced by the fact that the switching is restricted on the delay-part of the dynamics.  Our goal is to leverage this particular structure in order to improve our theoretical understanding of the dynamics at stake in these systems.  As we will see, it enables us to design tailored controllers whose performances or guarantees are better than for classical switching systems.

As a first contribution in this paper we prove that our networked systems can be modeled by pure switching systems where the switching matrices assume a particular form. As a byproduct we provide new LMI stability conditions with arbitrarily small conservativeness (which can be fixed a priori before computation)\rmj{.  We also} show that the general stability analysis problem is NP-hard in general.
%

As a second contribution, we address the controller design problem by assuming that, for each time $t$, the controller is aware \ad{of the propagation delays of the actuation signals sent at times $t, t+1, \ldots, t+N-1$. We call $N$ the \emph{look-ahead} parameter. If $N = 0$ the controller is not aware of any of the past, current and future propagation delays: we will call this situation the \emph{delay independent case}. If $N \geq 1$ the controller is aware of the current and $N-1$ next future routing path choices, and keeps memory of the past delays: we will call this situation the \emph{delay dependent case}. Note that $N = 1$ describes the situation where the controller is only aware of the propagation delay at the current time $t.$}

From the network point of view the practically admissible values for $N$ depend on the protocol used to route data (see \cite{YangWiMesh2011} and references therein for an overview on routing protocols for wireless multi-hop networks). If the controller node $v_c$ of $\mathcal G$ is allowed by the protocol to chose a priori the routing path (e.g. source routing protocols), then we can assume that the controller is aware of the routing path and the associated delay, and therefore is also aware of the switching signal (i.e. $N>0$). If instead the protocol allows each communication node to choose the next destination node according to the local neighboring network status information (e.g. hop-by-hop routing protocols) then we cannot assume that the controller is aware of the routing path, and therefore of any of the past, current and future propagation delays (i.e. $N=0$).

We will first analyze the situation where one can chose an arbitrarily large (but finite) look-ahead parameter $N$. This can occur in situations where the dynamics of the plant is given, but one can design the protocol, and thus require a certain look-ahead knowledge if needed. We prove that in this case the controllability verification problem can be split into two sub-problems, one characterized by a regular matrix and one by a nihilpotent matrix: we provide an exponential time algorithm to solve the regular case and a polynomial time algorithm to solve the nihilpotent case.
Finally, we analyze the situation where the look-ahead parameter $N$ is fixed and given as part of the problem. We show that in this case it is much harder to decide controllability of the plant: indeed we show that it can be necessary to make use of nonlinear controllers for stabilizing a linear plant.

The paper is organized as follows. In Section \ref{secModeling} we provide the problem formulation and the modeling framework, and define the two families of controllers introduced above.
In Sections \ref{secAnalysis}, \ref{secDesignDD} and \ref{secDesignDI} we address the stability verification and the controller design problems, both for the \emph{delay dependent} and the \emph{delay independent} case.
In Section \ref{secConclusions} we provide concluding remarks and open problems for future research.


\section{Modeling}\label{secModeling}

In this paper we will address the problem of stabilizing a discrete-time LTI system of the form
$$
x(t+1) = A x(t) + B u(t),\quad y(t) = x(t),\quad t \geq 0,
$$
with $A \in \mathbb R^{n \times n}$ and $B \in \mathbb R^{n \times m}$, using a state-feedback controller $\mathcal C$. From now on we will obviously assume that the plant pair $(A,B)$ is controllable. We assume that the control signal $v(t)$ generated by $\mathcal C$ is relayed to the actuator of the plant $\mathcal P$ via a multi-hop network \cite{AlurTAC11}. The network $\mathcal G$ consists of an acyclic graph $(V,E)$, where the node $v_c \in V$ is directly interconnected to the controller $\mathcal C$ and the node $v_u$ is directly interconnected to the actuator of the plant $\mathcal P$. In order to relay each actuation data $v(t)$ to the plant, at each time step $t$ a unique path of nodes that starts from $v_c$ and terminates in $v_u$ is exploited. As classically done in (wireless) multi-hop networks to improve robustness of the system with respect to node failures we exploit redundancy of routing paths, therefore the number of paths that can be used to reach $v_u$ from $v_c$ is assumed to be greater than one. To each path a different delay can be associated in transmitting data from $v_c$ to $v_u$, depending on the transmission scheduling and on the number of hops to reach the actuator (see \cite{DInnocenzoTAC13} for details). Since the choice of the routing path usually depends on the internal status of the network (e.g. because of node and/or link failures, bandwidth constraints, security issues, etc.), we assume that the chosen routing path is time-varying. As a consequence the control signal $v(t)$ at time $t$ will be delayed of a finite number of time steps that we model as a disturbance signal $\sigma(t)\in D: \,t \geq 0$, where $D \subseteq \{0, 1, \dots, d_{max}\}$ is the set of possible delays introduced by all routing paths and $d_{max}$ is the maximum delay. For the reasons above we model the dynamics of the networked control system $\mathcal N$ as follows:

\begin{definition}\label{def-general-sdsystem}
The dynamics of the interconnected system $\mathcal N$ can be modeled as\rmj{\footnote{In this paper we adopt the \lq Matlab notation,\rq{}  that is, $v(t - d_{max}:t), \sigma(t - d_{max}:t)$ represent the latest {$d_{max}+1$} values of the output of the controller $v(\cdot) \in \mathbb R^{m}$ and of the switching signal $\sigma(\cdot) \in D$.}}
\begin{align}\label{sdsystem}
x(t+1) = A x(t) + B u(v(t - d_{max}:t),\sigma(t - d_{max}:t)) = A x(t) + B \sum_{
\tiny
\begin{array}{cc}
    t - d_{max} \leq t' \leq t: \\
    t'+\sigma(t')=t
\end{array}}
\normalsize
v(t').
\end{align}
 We \rmj{also} define the signal of {\emph{actuation times}} $\tau : \mathbb N \rightarrow \{0,1\}$ such that $\tau(t) = 1$ if there exists $t' \leq t$ with $t = t' + \sigma(t')$, $\tau(t) = 0$ otherwise.
\end{definition}
\begin{proposition}
The dynamical system induced by Equation \eqref{sdsystem} is equivalent to the following discrete-time linear switching system:
\begin{equation}
x_e(t+1) = A_e x_e(t) + B_e(\sigma(t)) v(t),
\end{equation}
where
$x_e(t) \doteq (x(t),u_{1}(t),u_{2}(t),\dots, u_{d_{max}}(t)) \in \mathbb R^{n + m d_{max}}$ represents the internal state of $\mathcal N$, with $u_d(t) \in \mathbb R^{m}$, $\forall d \in \{1, \ldots, d_{max}\}$ the actuation signal that is forecast to be applied to the plant at time $t+d-1$, i.e.
\begin{equation}\label{eq-ud}
u_{d}(t)=\sum_{t'<t: t'+\sigma(t')=t+d-1} v(t'),
\end{equation}
and where
\begin{align*}
A_e = \begin{pmatrix}
A&B&0&\dots & 0\\
0&0&I&\dots & 0\\
\vdots&\vdots&\vdots&\ddots&\vdots\\
0&0&0&\dots &I\\
0&0&0&\dots &0
\end{pmatrix},\quad\quad
B_e(\sigma(k)) = \left(
                 \begin{array}{l}
                    B \cdot \delta_{\sigma(k),0}\\
                    I \cdot \delta_{\sigma(k),1}\\
                   \vdots \\
                    I \cdot \delta_{\sigma(k),d_{max}-1}\\
                    I \cdot \delta_{\sigma(k),d_{max} }
                 \end{array}
               \right),
\end{align*}
with $\delta_{\sigma(k),d}$, $d \in \{0, 1, \ldots, d_{max}\}$, the Kronecker's delta.
\end{proposition}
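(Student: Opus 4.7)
The plan is a direct verification: I will show by induction on $t$ that the trajectories of the extended system coincide with those of \eqref{sdsystem}, provided the initial state $x_e(0)$ is chosen so that $x(0)$ matches the initial plant state and $u_d(0) = 0$ for every $d \in \{1,\ldots,d_{max}\}$ (which is consistent with \eqref{eq-ud} when no actuation signal has been emitted prior to $t = 0$). The core of the argument is a one-step recursion for the auxiliary components $u_d(t)$, obtained by isolating the term $t' = t$ in the defining sum \eqref{eq-ud}.

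First I would establish the following elementary identity. Splitting the range $\{t' < t+1 : t' + \sigma(t') = t+d\}$ according to whether $t' < t$ or $t' = t$, and using that $t + d = t + (d+1) - 1$, the definition \eqref{eq-ud} yields, for every $d \in \{1,\ldots,d_{max}-1\}$,
\begin{equation*}
u_d(t+1) \;=\; u_{d+1}(t) \;+\; \delta_{\sigma(t),d}\,v(t).
\end{equation*}
For $d = d_{max}$ the past-only sum is empty (since $d_{max}$ is by definition the largest admissible delay), so $u_{d_{max}}(t+1) = \delta_{\sigma(t),d_{max}}\,v(t)$. Comparing with the block rows of $A_e$ and $B_e(\sigma(t))$, each of these relations is exactly the identity produced by the $(d+1)$-th block row of $A_e x_e(t) + B_e(\sigma(t))v(t)$, which completes the update for the $u_d$ components.

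Second I would handle the $x$-component. Applying the same splitting to the sum appearing in \eqref{sdsystem},
\begin{equation*}
\sum_{\substack{t-d_{max}\le t'\le t \\ t'+\sigma(t')=t}} v(t') \;=\; \Bigg(\sum_{\substack{t'<t \\ t'+\sigma(t')=t}} v(t')\Bigg) \;+\; \delta_{\sigma(t),0}\,v(t) \;=\; u_1(t) \;+\; \delta_{\sigma(t),0}\,v(t),
\end{equation*}
where the first equality simply isolates $t' = t$, and the second uses \eqref{eq-ud} with $d = 1$. Substituting into \eqref{sdsystem} gives $x(t+1) = A x(t) + B u_1(t) + B\,\delta_{\sigma(t),0}\,v(t)$, which matches the first block row of $A_e x_e(t) + B_e(\sigma(t))v(t)$.

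Combining the two updates closes the induction and yields $x_e(t+1) = A_e x_e(t) + B_e(\sigma(t))v(t)$ for all $t\ge 0$, with the first block of $x_e(t)$ coinciding with the plant state $x(t)$ of \eqref{sdsystem}. Conversely, since the map $x_e \mapsto x$ is just the projection onto the first block, every trajectory of the extended system projects onto a trajectory of \eqref{sdsystem}, which establishes equivalence. There is no genuine obstacle here; the only care needed is in the bookkeeping of the index shift $t+d \leftrightarrow t+d-1$ and in the boundary case $d = d_{max}$, where the recursion for $u_d$ degenerates because no older emission can contribute.
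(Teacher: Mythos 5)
Your proof is correct: the paper states this proposition without giving a proof, and your argument supplies precisely the bookkeeping the authors leave implicit, namely the one-step recursions $u_d(t+1)=u_{d+1}(t)+\delta_{\sigma(t),d}\,v(t)$ (with the degenerate case $d=d_{max}$) and $x(t+1)=Ax(t)+Bu_1(t)+B\delta_{\sigma(t),0}v(t)$, which reproduce the block rows of $A_e$ and $B_e(\sigma(t))$. The only cosmetic point is that when you identify the past-only sum with $u_1(t)$ you should note that the restriction $t-d_{max}\le t'$ is automatic, since any $t'<t-d_{max}$ satisfies $t'+\sigma(t')<t$; this is exactly the kind of index care you already flag, and it does not affect correctness.
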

The above model is quite general and allows representing a wide range of routing communication protocols for (wireless) multi-hop networks \cite{YangWiMesh2011}. We remark that several variations are possible. For instance, in our setting, it could happen during the run of {the system} that at some particular time no feedback signal comes back to the plant: we assume that in this situation the actuation input to the plant is set to zero. A variation is to implement a \emph{hold} that would keep memory of the previous input signal and resend it if the new one is empty. Moreover, in our setting, it could also happen that two control signals sent at different times reach the actuator simultaneously: we assume that in this situation the actuation input to the plant is set to the sum of the control signals that arrive simultaneously. A variation is to keep the most recent control signal and discard all the others.

We defer the comparison of such variants for further studies {(see \cite{GommansIJC2013} for a recent work that \rmj{takes into account packet dropouts and provides a comparison among some of these approaches in this setting}).}

Before introducing the delay dependent and delay independent classes of linear controllers for the feedback networked scheme we formally define the state space of the controller: for classical (i.e. non-switched) feedback systems with fixed delay it is well known that the system can be neither controllable nor stabilizable if the feedback only depends on $x(t),$ that is if the controller does not have a memory of its past outputs. Therefore, \emph{we allow that the controller keeps memory of its past $d_{max}$ outputs $v(t-d_{max}), \ldots, v(t-1)$}. Also, we define $N$ to be the \emph{look-ahead} of the controller.  That is, for each time $t,$ the controller is aware of the $N > 0$ future routing path choices, and therefore of the future propagation delays $\sigma(t), \sigma(t+1), \ldots, \sigma(t+N-1).$  As illustrated in Section \ref{secIntro} the admissible range of values for $N$ depends on the protocol used to route data in the network. If $N \geq 1$ we also assume that the controller keeps memory of the past $d_{max}$ switching signals $\sigma(t-d_{max}), \ldots, \sigma(t-1)$. If instead $N = 0$ then we assume that the controller ignores the past, current and future switching signals.


\subsection{Delay dependent case} \label{subsecModelingDD}

In the delay-dependent case, (i.e. $N>0$), the controller can reconstruct the state $x_e(t)$ of $\mathcal N$ via a linear combination \rmj{of its past outputs $v(t'),$ $t'<t$} as in Equation \eqref{sdsystem}. Note that, once the current state of $\mathcal N$ has been reconstructed, the past $d_{max}$ switching signals $(\sigma(t-d_{max}) : \sigma(t-1))$ are irrelevant for the controller design.  Thus, we can assume without loss of generality that the controller only depends on the signals $(\sigma(t) : \sigma(t+N-1))$.

\begin{definition}\label{def-dep} Assume that at each time the sequence of the next $N > 0$ switching signals is known. We define \emph{a delay-dependent switching linear control law with look-ahead $N$} as follows
\begin{equation}\label{eq-vt-dep}
v(t) = K(\boldsymbol{\sigma}) x_e(t),
\end{equation}
with $\boldsymbol{\sigma} = (\sigma(t), \cdots, \sigma(t+N-1)) \in D^{N}$ and $K(\boldsymbol{\sigma}) \in \mathbb R^{m\times(n + m d_{max})}$.
\end{definition}

We call the controller of {Definition \ref{def-dep} \emph{acausal} if $N > 1.$ Note that in several practical situations the networking protocol can be designed to chose at any time $t$ the future routing paths up to $t+N-1.$ If it is not possible to know the signal $\sigma(s)$ for $s>t$ (i.e. $N=0$ or $1$) we say that we have a \emph{causal} controller. } The dynamics of the closed loop system can be written as follows:
\begin{proposition} \label{prop-delay-dependent-cl}
Given the system \eqref{sdsystem} and a switching linear control law as in Definition \ref{def-dep}, the closed loop system can be modeled as follows:
\begin{equation}\label{switched-closed-loop-dd}
x_e({t+1}) = M(\sigma(t)) x_e(t), \quad M(\sigma(t)) \in \Sigma := \{A_e + B_e(\sigma(t)) K(\boldsymbol{\sigma}) : \boldsymbol{\sigma} \in D^{N} \}.
\end{equation}
\end{proposition}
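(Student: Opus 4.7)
The plan is essentially a direct substitution: since the previous proposition has already recast the system as the switching linear system
\begin{equation*}
x_e(t+1) = A_e x_e(t) + B_e(\sigma(t)) v(t),
\end{equation*}
all that remains is to insert the feedback law of Definition \ref{def-dep} into this equation and collect terms.

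First, I would fix an arbitrary time $t$ and set $\boldsymbol{\sigma} = (\sigma(t), \ldots, \sigma(t+N-1)) \in D^{N}$. By Definition \ref{def-dep}, the controller's output at time $t$ is $v(t) = K(\boldsymbol{\sigma}) x_e(t)$. Plugging this into the extended dynamics yields
\begin{equation*}
x_e(t+1) = A_e x_e(t) + B_e(\sigma(t)) K(\boldsymbol{\sigma}) x_e(t) = \bigl(A_e + B_e(\sigma(t)) K(\boldsymbol{\sigma})\bigr)\, x_e(t),
\end{equation*}
so that the closed-loop transition matrix at time $t$ is exactly $A_e + B_e(\sigma(t)) K(\boldsymbol{\sigma})$, which by construction belongs to $\Sigma$. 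This establishes the claimed inclusion $M(\sigma(t)) \in \Sigma$.

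At this stage it is worth commenting on a subtle point, which I view as the only real issue to address in the write-up: although we denote the transition matrix $M(\sigma(t))$, it actually depends on the whole window $\boldsymbol{\sigma} = (\sigma(t), \ldots, \sigma(t+N-1))$, not only on the current delay $\sigma(t)$. To justify the notation, I would observe that (i) for the controller design problem we are interested in the family $\Sigma$ of all possible one-step maps rather than in the precise function $\sigma \mapsto M(\sigma)$, and (ii) by enlarging the switching alphabet from $D$ to $D^{N}$ one can always re-index the system so that the transition matrix is a function of a single augmented switching symbol. In particular, the closed-loop trajectories are contained in the set of trajectories generated by arbitrary products of matrices in $\Sigma$, which is the key property that will be exploited in the stability and controllability analyses of the subsequent sections. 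No nontrivial estimate is needed; the argument is a one-line verification plus a remark on the notation.
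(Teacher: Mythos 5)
Your proof is correct and follows the same route the paper intends: the paper states this proposition without an explicit proof, treating it as an immediate consequence of substituting $v(t)=K(\boldsymbol{\sigma})x_e(t)$ into the extended dynamics $x_e(t+1)=A_e x_e(t)+B_e(\sigma(t))v(t)$, which is exactly your one-line verification. Your additional remark that $M$ really depends on the whole window $\boldsymbol{\sigma}$ rather than on $\sigma(t)$ alone, and that the closed-loop trajectories are contained among arbitrary products of matrices in $\Sigma$, is an accurate and worthwhile clarification consistent with how the paper later uses the result.
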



\subsection{Delay independent case ($N=0$)} \label{subsecModelingDI}

In Definition \ref{def-dep} we assumed that the controller knows the previous values of the switching signal, and thus can reconstruct $x_e(t)$ by applying equation (\ref{eq-ud}). If $N = 0$ this is not possible since the controller ignores the current and thus the previous values of the switching signal, and the only variables it can use are $x(t)$ and the $d_{max}$ past control commands $v(t-d_{max}),\dots,v(t-1)$. For this reason, we define
\begin{equation}
v_e(t) \doteq (x(t), v(t-d_{max}),\dots,v(t-1))
\end{equation}
the state variable accessible to the controller in the delay independent case.
\begin{definition}\label{def-indep} Assume that at each time the switching signal is unknown, i.e. $N = 0$. We define a static control law as follows:
\begin{equation}\label{eq-vt}
v(t)= K v_e(t), \quad K \in \mathbb R^{m\times(n + m d_{max})}.
\end{equation}
\end{definition}

The dynamics of the closed loop system can be written as follows:
\begin{proposition} \label{prop-delay-independent-cl}
Given the system \eqref{sdsystem} and a linear control law as in Definition \ref{def-indep}, the closed loop system can be modeled as follows:
\begin{equation}\label{switched-closed-loop-independent}
v_e({t+1})= M(\sigma,t) v_e(t),
\end{equation}
where
\begin{eqnarray}\label{eq-sigma-cl-independent}
M(\sigma,t)=\left\{
\begin{pmatrix}
    A & B \cdot \gamma_{d_{max}}(t) & B \cdot \gamma_{d_{max}-1}(t) &\cdots & B \cdot \gamma_{1}(t) \\
    0&0&I&\cdots & 0\\
    \vdots&\vdots&\vdots&\ddots&\vdots\\
    0&0&0&\cdots&I\\
    K_0&K_1&K_2 &\cdots&K_{d_{max}}
\end{pmatrix} +
\begin{pmatrix}
    B K \cdot \gamma_0(t)\\
    0\\
    \vdots\\
    0\\
    0
\end{pmatrix}\right\}
\end{eqnarray}
with $K = [K_0\ K_1\ K_2\ \cdots \ K_{d_{max}}]$ and $\gamma_{d}(t) = 1$ if $\sigma(t-d)=d$, $\gamma_d(t) = 0$ otherwise.
\end{proposition}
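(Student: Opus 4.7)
The proof is essentially a bookkeeping exercise that amounts to expanding $v_e(t+1)$ block by block and matching terms against the matrix in \eqref{eq-sigma-cl-independent}.

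My plan is to first write out $v_e(t+1)=(x(t+1), v(t+1-d_{max}),\dots,v(t))$ and then compute each block separately in terms of $v_e(t)=(x(t), v(t-d_{max}),\dots,v(t-1))$. For the middle blocks $v(t+1-d),$ $d=d_{max},\dots,2,$ the value is simply $v(t-(d-1))$, which is already the corresponding entry of $v_e(t)$. This explains the shift-by-one structure (block identities on the super-diagonal and zeros elsewhere) in rows $2$ through $d_{max}$ of the matrix.

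For the last block, I would use the control law of Definition \ref{def-indep}: $v(t) = K v_e(t) = K_0 x(t) + K_1 v(t-d_{max}) + \cdots + K_{d_{max}} v(t-1)$. This gives exactly the bottom block row $[K_0\ K_1\ \cdots\ K_{d_{max}}]$ of $M(\sigma,t)$.

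The first block row requires the most care, so this is where I would concentrate. Using \eqref{sdsystem}, I would rewrite
$$x(t+1) = A x(t) + B\sum_{d=0}^{d_{max}} \gamma_d(t)\, v(t-d),$$
where the indicators $\gamma_d(t)=\mathbb 1[\sigma(t-d)=d]$ pick out exactly those past commands whose routing delay causes them to be actuated at time $t$. For $d\geq 1$ each summand is $B\gamma_d(t) v(t-d)$, and $v(t-d)$ is entry $(d_{max}+2-d)$ of $v_e(t)$; reversing the index order from $d=d_{max}$ down to $d=1$ yields precisely the pattern $[A,\ B\gamma_{d_{max}}(t),\ B\gamma_{d_{max}-1}(t),\ \dots,\ B\gamma_1(t)]$ in the first row of the first matrix of \eqref{eq-sigma-cl-independent}. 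The only non-obvious term is $d=0$, where $v(t)$ is not itself an entry of $v_e(t)$; here I substitute $v(t)=K v_e(t)$, which produces the rank-one additive correction $B K\gamma_0(t)$ in the first block row, and explains why $M(\sigma,t)$ is written as a sum of two matrices. Once these three pieces are assembled, the proposition follows. There is no genuinely hard step; the only thing to watch is keeping the index ordering of $v_e$ and the $\gamma_d$'s consistent.
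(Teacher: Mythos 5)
Your block-by-block verification is correct: the reindexing $t'=t-d$ in Equation \eqref{sdsystem} gives exactly the indicators $\gamma_d(t)$, the shift structure and bottom row follow from the definition of $v_e$ and of the control law, and the substitution $v(t)=Kv_e(t)$ accounts for the additive $BK\gamma_0(t)$ term. The paper states this proposition without proof, treating it as an immediate computation, and your argument is precisely that computation, so it matches the paper's (implicit) approach.
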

There are two important differences between the systems defined by Equations \eqref{switched-closed-loop-dd} and \eqref{switched-closed-loop-independent}. First, the set $\Sigma$ in (\ref{switched-closed-loop-independent}) has a number of matrices that can be exponential in the number of delays $|D|$ because the matrix $M(\sigma,t)$ depends on the values $(\sigma(t),\sigma(t-1),\dots,\sigma(t-d_{max}))$. Second, for the same reason, the closed-loop formulation (\ref{switched-closed-loop-independent}) is not a switching system with arbitrary switching signal, as successive occurrences of $M(\sigma,t)$ are correlated. Because of this correlation in the succession of matrices this setting seems harder to be represented as a pure switching system. Even though recent methods based on LMI criteria have been proposed that offer a natural framework for analyzing switching signals described by a regular language (e.g. \cite{ajprhscc11,ajprhscc12,LeeD06}), it would be convenient to have a formulation of the closed loop switching system without any constraint on the switching signal. Indeed, more methods for analyzing switching systems have been designed in the general framework of unconstrained switching. In the following theorem we show that it is always possible to model system \eqref{switched-closed-loop-independent} as a pure switching system.  Moreover, \rmj{even though we slightly augment the state space, the whole system we obtain has a polynomial size with respect to the initial size of the problem}.
\begin{theorem}\label{thm-ssreduction-independent}

Any $n$-dimensional \switcheddelay{} and delay-independent control of dimension $m$ and set of delays $D$ can be represented as a switching system with arbitrary switches among $|D|$ matrices, characterized by a $(n+2d_{max}m)$-dimensional state space.
\end{theorem}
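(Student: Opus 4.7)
The plan is to augment the state space so that we simultaneously carry the plant-side extended state $x_e(t)$ from Proposition 1 and the controller-side state $v_e(t)$, thereby decoupling the dynamics from any explicit dependence on past switching signals. Concretely, I would define the augmented state
\begin{equation*}
z(t) := \bigl(x_e(t),\,v(t-d_{max}),\,v(t-d_{max}+1),\ldots,v(t-1)\bigr) \in \mathbb R^{n+2d_{max}m},
\end{equation*}
whose dimension matches the bound claimed in the theorem, since $x_e(t)$ already lies in $\mathbb R^{n+md_{max}}$ and we append $d_{max}$ past input vectors of size $m$.

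First I would observe that $v_e(t)=(x(t),v(t-d_{max}),\ldots,v(t-1))$ is a linear projection $\Pi z(t)$: the block $x(t)$ consists of the first $n$ coordinates of $x_e(t)$, and the input-history part is stored verbatim in the last $d_{max}m$ coordinates of $z(t)$. Consequently the controller output becomes $v(t)=K v_e(t)=K\Pi z(t)=:\tilde K z(t)$, a pure linear function of $z(t)$ without reference to any past value of $\sigma$.

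Next I would use Proposition 1 to describe the evolution of the $x_e$-block: $x_e(t+1)=A_e x_e(t)+B_e(\sigma(t))v(t)=A_e x_e(t)+B_e(\sigma(t))\tilde K z(t)$, which depends only on $z(t)$ and on the \emph{current} switching signal $\sigma(t)$. The input-history block updates by a deterministic shift that discards $v(t-d_{max})$, moves each remaining entry one slot, and inserts the freshly computed $v(t)=\tilde K z(t)$ at the newest slot. Both operations are linear in $z(t)$ with coefficients that do not depend on $\sigma$. Assembling the two blocks, I obtain
\begin{equation*}
z(t+1)=M_{\sigma(t)}\,z(t),\qquad M_{\sigma(t)}\in\{M_0,M_1,\ldots,M_{d_{max}}\},
\end{equation*}
so the closed-loop system is a switching linear system over exactly $|D|$ matrices indexed by the current delay, with arbitrary (unconstrained) switching.

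The main conceptual step to justify carefully is that this construction genuinely reproduces the original closed-loop system \eqref{switched-closed-loop-independent} in which the entries $B\gamma_d(t)$ explicitly depend on the past sequence $(\sigma(t-d_{max}),\ldots,\sigma(t))$. The key observation is that the information encoded in those past delays is already stored inside $x_e(t)$: by definition \eqref{eq-ud}, each coordinate $u_d(t)$ aggregates exactly those past commands that are scheduled to hit the actuator later, and was itself built using the appropriate past $\sigma$'s. Hence once $x_e(t)$ is included in the augmented state, no further memory of $\sigma$ is needed, and the correlations that made \eqref{switched-closed-loop-independent} fall outside the pure-switching framework disappear. Verifying this equivalence by induction on $t$ — starting from consistent initial conditions for the $u_d$'s and the input-history — is the only nontrivial bookkeeping, and completes the proof.
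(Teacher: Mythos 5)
Your proposal is correct and follows essentially the same route as the paper: the paper's proof likewise stacks the plant-side extended state $x_e(t)=(x(t),u_1(t),\dots,u_{d_{max}}(t))$ together with the controller memory $(v(t-d_{max}),\dots,v(t-1))$ into one $(n+2d_{max}m)$-dimensional vector, writes $v(t)$ as a fixed linear function of that vector, and observes that the resulting update matrices depend only on the current $\sigma(t)$, giving $|D|$ matrices under arbitrary switching. Your only (harmless) notational slip is writing the matrix family as $\{M_0,\dots,M_{d_{max}}\}$, which has $d_{max}+1$ elements rather than $|D|$ when $D\subsetneq\{0,\dots,d_{max}\}$; indexing by $d\in D$ fixes this.
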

\begin{proof}
The main idea of the proof is to make use of \emph{both $x_e(t) = (x(t),u_{1}(t),u_{2}(t),\dots, u_{d_{max}}(t))$ and $v_e(t) \doteq (x(t), v(t-d_{max}),\dots,v(t-1))$} in the closed loop state-space representation of the system. Recall that $u_d(t)$ is the sum of the previous outputs of the controller, that are forecast to arrive at the plant at time $t+d, d \in D$ and that $v(t)$ is the output of the controller at time $t$. Of course, the controller does not know $u_d(t)$ in this delay-independent setting (we will take that into account in the construction), but this variable is needed in order to reconstruct the feedback signal. On the other hand, $v(t)$ is needed in order to represent the memory of the controller.  We now formally describe the state-space, and then the matrices. Let
$$
K=(K_0,K_1,\dots,K_{d_{max}})\in \re^{m\times (n+d_{max} m)}
$$
be the linear controller and
$$
V(t)=(v(t-d_{max}),\dots,v(t-1))^T\in\re^{d_{max} m}
$$
the memory of the controller. We define a state-space vector $w(t)\in \re^{n+2d_{max}m}$ as follows:
$$
w (t)=(x(t),u_1(t),\dots, u_{d_{max}}(t),v(t-d_{max}),\dots,v(t-1)).
$$
For any $\sigma(t) \in D$, the following equations describe the switching linear dynamics of $w(t)$:
\begin{eqnarray}
\nonumber x(t+1)&=&Ax(t)+B(u_1(t) + z_{0}(\sigma(t)),\\
\nonumber u_s(t+1)&=&u_{s+1}(t) + z_{s}(\sigma(t)), \quad 1\leq s\leq d_{max} \\
\nonumber V(t+1)&=& (0,\dots,0,K_0x(t))^T + \begin{pmatrix}0&I&\dots&0\\0&0&\ddots&0\\0&\dots&0&I\\K_1 &\dots &&K_{d_{max}}\end{pmatrix}V(t).
\end{eqnarray}
In the above equations $z_s(\sigma(t))$, $s=0, \dots, d_{max}$, represents the controller output to be fed back to the plant with a delay $s = \sigma(t)$:
\begin{equation*}
z_s(\sigma(t)) = \left\{ \begin{array}{ll}
                   K\begin{pmatrix}x(t)\\V(t) \end{pmatrix} &  \mbox{ if } \sigma(t) = s, \\
                   0 & \mbox{otherwise}.
                 \end{array}\right.
\end{equation*}
\end{proof}

As a consequence all tools developed for general switching systems can be used for stability analysis and controller design (e.g. \cite{jungers_lncis,sun-ge}). However, the particular delay model that we are considering makes our system a special case of general switching systems, endowed with a characteristic matrix structure. In the next sections we aim at exploiting such special structure to derive tailored stability analysis and controller design results beyond the theoretical barriers that hold for general switching systems.

\section{Stability analysis} \label{secAnalysis}

We first define the stability notion for System \eqref{sdsystem} with respect to delay dependent and delay independent control laws.
\begin{definition}
We say that a system \eqref{sdsystem} is \emph{stable} for a given control law as in Definition \ref{def-dep} (resp. Definition \ref{def-indep}) if, for any switching signal $\sigma(t)$ and for any initial condition $x_e(0)$,
\begin{equation*}
\lim_{t\rightarrow \infty} x_e(t)=0 \quad (\textrm{resp. } \lim_{t\rightarrow \infty} v_e(t)=0).
\end{equation*}
\end{definition}

As illustrated above, a \switcheddelay{} can be put in the well studied framework of linear switching systems with arbitrary switching signal. Even though these systems have been at the center of a huge research effort in the last decades (see for instance \cite{sun-ge,LeeD06,jungers_lncis,valcher-positive}), they are known to be very difficult to handle. Nevertheless, it follows from Proposition \ref{prop-delay-dependent-cl} and Theorem \ref{thm-ssreduction-independent} that one can check the stability of a given \switcheddelay{} with arbitrary precision:
\begin{corollary}\label{cor-approx-dep}
Given the system \eqref{sdsystem} and a control law as in Definition \ref{def-dep} (resp. Definition \ref{def-indep}), and for any $\epsilon > 0$, there exists an algorithm that computes in finite time the worst rate of growth of the system up to an error of $\epsilon$. More precisely, for any real $r>0$ the algorithm decides whether
\begin{itemize}
\item $\exists K \in \re : \forall \sigma, \forall t,\ |x_e(t)|\leq K (r+\epsilon)^t$ $\big($resp. $|v_e(t)|\leq K (r(1+\epsilon))^t$$\big)$;
\item $\exists K \in \re, K>0, \exists \sigma:\ |x_e(t)|\geq K (r-\epsilon)^t$ $\big($resp. $|\rmj{v_e(t)}|\geq K (r(1-\epsilon))^t$$\big).$
\end{itemize}
\rmj{Moreover there exists such an algorithm that terminates in an amount of time which is \ad{a polynomial $P_\epsilon(n^{\log(|D|)})$ of degree ${\log(|D|)},$} where $n$ is the dimension of the plant and $|D|$ is the number of delays.}
\end{corollary}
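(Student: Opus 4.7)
My plan is a reduction to approximate joint spectral radius (JSR) computation, followed by invocation of a classical approximation algorithm.

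\emph{Step 1 (reduction to arbitrary-switching JSR).} By Proposition~\ref{prop-delay-dependent-cl} in the delay-dependent case and by Theorem~\ref{thm-ssreduction-independent} in the delay-independent case, the closed-loop dynamics is in each case an autonomous arbitrary-switching linear system $w(t+1) = M(\sigma(t))\,w(t)$, where $M(\sigma(t))$ ranges over a finite set $\Sigma$ of matrices; the dimension of $w(t)$ is polynomial in $n$, $m$, $d_{max}$, and $|\Sigma|$ is bounded by a power of $|D|$ (namely $|D|^N$ in the delay-dependent case and $|D|$ in the delay-independent case). By a standard characterization of the asymptotic growth of arbitrary switching linear systems (see e.g.\ \cite{jungers_lncis}), the infimum of admissible exponential growth rates for such a system coincides with the joint spectral radius $\hat\rho(\Sigma)$. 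Hence each of the two alternatives in the corollary becomes a decision of the form ``$\hat\rho(\Sigma) \leq r+\epsilon$?'' versus ``$\hat\rho(\Sigma) \geq r-\epsilon$?'' (and similarly in the multiplicative version), and it suffices to approximate $\hat\rho(\Sigma)$ with relative precision of order $\epsilon/r$.

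\emph{Step 2 (JSR approximation).} For an integer $k \geq 1$ I would use the length-$k$ estimate $\hat\rho_k := \max_{(i_1,\dots,i_k)} \rho(M_{i_k}\cdots M_{i_1})^{1/k}$, together with the classical Gelfand-type bracketing $\hat\rho_k \leq \hat\rho(\Sigma) \leq n^{1/k}\,\hat\rho_k$, where $n$ denotes the dimension of the lifted state $w$. This bracketing guarantees that a relative error $1+\epsilon$ is reached as soon as $k = \Theta\!\bigl(\log n / \log(1+\epsilon)\bigr)$. The algorithm then enumerates the $|\Sigma|^k$ products of length $k$, computes each of their spectral radii, and compares $\hat\rho_k$ with $r$ (with the appropriate additive/multiplicative correction). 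This decides the dichotomy stated in the corollary in finite time, which already yields the decidability part.

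\emph{Step 3 (complexity).} Each product requires $k$ multiplications of matrices of size polynomial in $n$ and one spectral radius computation, for a cost polynomial in $n$ and $k$. The total number of products is $|\Sigma|^k \leq |D|^{O(k)}$; with $k = \Theta_\epsilon(\log n)$ this gives $|D|^{O_\epsilon(\log n)} = n^{O_\epsilon(\log|D|)}$, i.e.\ a polynomial expression in $n^{\log|D|}$ whose coefficients and degree depend only on $\epsilon$ and on $|D|$. I expect the main obstacle to be matching the precise polynomial form ``of degree $\log|D|$'' announced in the statement: different JSR-approximation schemes (Gripenberg's branch-and-bound, lifted sum-of-squares relaxations, John-ellipsoid norms, Kronecker-power liftings) give slightly different exponent profiles, and one must pick the one whose complexity bookkeeping exactly aligns with the form $P_\epsilon(n^{\log|D|})$, rather than a cruder bound such as $n^{c(\epsilon)\log|D|}$.
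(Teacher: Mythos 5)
Your Step 1 is exactly the paper's route: reformulate the closed loop as an arbitrary-switching linear system (Proposition~\ref{prop-delay-dependent-cl}, resp.\ Theorem~\ref{thm-ssreduction-independent}) and identify the worst-case growth rate with the joint spectral radius of the resulting finite matrix family. Where the paper then simply invokes the known approximation procedures of \cite[Corollary 3.1]{protasov-jungers-blondel09} or \cite[Theorem 2.16]{jungers_lncis}, together with their published complexity bound (polynomial in $n^{\log m}$, $m$ the number of matrices), you attempt a self-contained scheme in Step 2 — and that is where there is a genuine gap. The bracketing you rely on, $\hat\rho_k \leq \hat\rho(\Sigma) \leq n^{1/k}\,\hat\rho_k$ with $\hat\rho_k := \max_{|w|=k}\rho(M_w)^{1/k}$, is false as a pointwise inequality in $k$. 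Take $\Sigma=\{e_1e_2^T,\, e_2e_1^T\}$ in $\mathbb{R}^{2\times 2}$: then $\hat\rho(\Sigma)=1$, yet every product of odd length $k$ is nilpotent, so $\hat\rho_k=0$ and the claimed upper bound gives $0$. The spectral radii of finite products only provide guaranteed \emph{lower} bounds (their sup over all lengths equals $\hat\rho$ by Berger--Wang, but with no dimension-controlled rate at a fixed $k$); guaranteed upper bounds with an a priori accuracy come from norms, and a fixed norm such as the Euclidean one gives no uniform relative accuracy either (a single matrix $\bigl(\begin{smallmatrix}1 & M\\ 0 & 1\end{smallmatrix}\bigr)$ with $M$ large defeats any bound of the form $\max_{|w|=k}\|M_w\|^{1/k}\leq C_k\,\hat\rho$). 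Consequently your Step 2 neither decides the stated dichotomy nor yields the announced complexity.

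The repair is precisely the ingredient the paper cites: use the ellipsoidal-norm (conic programming / SDP) estimate applied to the set of products of length $k$. By the John-ellipsoid argument this computable quantity $\hat\rho_{\mathrm{SDP},k}$ satisfies $n^{-1/(2k)}\,\hat\rho_{\mathrm{SDP},k}\leq \hat\rho(\Sigma)\leq \hat\rho_{\mathrm{SDP},k}$, so relative accuracy $1+\epsilon$ is reached for $k=\Theta(\log n/\log(1+\epsilon))$, and the cost is polynomial in $n$ and in the number $m^k$ of products, i.e.\ $P_\epsilon(n^{\log m})$ — this is what gives the form $P_\epsilon(n^{\log |D|})$ in the statement (with the caveat, present in the paper as well, that in the delay-dependent case the family of Proposition~\ref{prop-delay-dependent-cl} may contain up to $|D|^N$ matrices, so the bound is really in terms of the cardinality of that family). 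Your closing remark in Step 3 about ``picking the scheme whose bookkeeping aligns'' is therefore not a cosmetic issue: only the norm/SDP-based estimates carry the $n^{1/(2k)}$-type guarantee that makes the complexity claim true, whereas the pure spectral-radius enumeration you propose does not terminate with a certified answer at any prescribed $k$.
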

\begin{proof}
Proposition \ref{prop-delay-dependent-cl} reformulates the \switcheddelay{} as a classical switching system, thus it is possible to apply one of the classical stability decision procedures derived in \cite[Corollary 3.1]{protasov-jungers-blondel09} or \cite[Theorem 2.16]{jungers_lncis}.  \rmj{These procedures are known to terminate within a time bounded by a polynomial in $n^{\log{(m)}},$ where $n$ is the dimension of the matrices, and $m$ is the number of matrices.}
\end{proof}

The above corollary provides a tool to approximate the worst rate of growth, by bisection on $r$. Thus, it is possible to decide with an arbitrary precision whether a \switcheddelay{} endowed with a delay dependent or delay independent control law is stable. However, \rmj{the complexity of the algorithm (the polynomial mentioned in the above corollary) strongly depends on the accuracy $\epsilon$ (this polynomial becomes huge for small values of $\epsilon$) and no algorithm is known that works in polynomial time with respect to $\epsilon.$ Hence, our solution does not work in polynomial time with respect to the accuracy}. This is not surprising in view of our next result: we show that given a system with variable delays and its controller, it is in general NP-hard to decide whether the controller asymptotically stabilizes the system.

%

\begin{theorem}
Given the system \eqref{sdsystem} and a delay dependent switching linear control law as in Definition \ref{def-dep}, unless $P=NP$, there is no polynomial-time algorithm that decides whether the corresponding closed loop system as in Proposition \ref{prop-delay-dependent-cl} is stable.
Also, the question of whether the system remains bounded is Turing-undecidable.
This is true even if the matrices have nonnegative rational entries, and the set of delays is $\{0,1\}.$
\end{theorem}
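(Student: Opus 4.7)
The plan is to reduce both hardness claims from the corresponding facts about the joint spectral radius (JSR) of pairs of nonnegative rational matrices. It is classical (Blondel and Tsitsiklis, with subsequent strengthenings to the nonnegative case via a standard block-doubling trick) that, given a pair $(N_0,N_1)$ of $k\times k$ nonnegative rational matrices, deciding whether $\rho(\{N_0,N_1\}) < 1$ is NP-hard, and deciding whether all products of $N_0, N_1$ remain bounded is Turing-undecidable. I will embed an arbitrary such pair into an instance of the closed-loop stability problem of the theorem.

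Given $N_0, N_1$, I take $n=m=k$, $d_{max}=1$, $D=\{0,1\}$, look-ahead $N=1$, and set $A = N_1$, $B = I$, $K(0) = (N_0-N_1,\; 0)$, $K(1) = (0,\; 0)$, where $K(\sigma) = (K_x(\sigma), K_u(\sigma))$. The pair $(A,B)$ is trivially controllable. Substituting into the formulas of Proposition 1, the two closed-loop matrices given by Proposition 3 become
\[
M(0) \;=\; \begin{pmatrix} N_0 & I \\ 0 & 0 \end{pmatrix}, \qquad M(1) \;=\; \begin{pmatrix} N_1 & I \\ 0 & 0 \end{pmatrix},
\]
both of which have nonnegative rational entries (note that the gains $K(\sigma)$ themselves may have negative entries, but only the closed-loop matrices $M(\sigma)$ are required to be nonnegative).

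A straightforward induction on the length of the switching sequence then gives
\[
M(\sigma_k) \cdots M(\sigma_1) \;=\; \begin{pmatrix} N_{\sigma_k} \cdots N_{\sigma_1} & N_{\sigma_k} \cdots N_{\sigma_2} \\ 0 & 0 \end{pmatrix},
\]
with the convention that the empty product of $N$'s equals $I$. The norms of $M(\sigma_k)\cdots M(\sigma_1)$ and $N_{\sigma_k}\cdots N_{\sigma_1}$ therefore agree up to a multiplicative constant, which yields both $\rho(\{M(0),M(1)\}) = \rho(\{N_0,N_1\})$ and an equivalence of the \emph{all products bounded} property for the two sets. Since the closed-loop system is stable (resp.\ has bounded trajectories) precisely when $\rho(\{M(0),M(1)\}) < 1$ (resp.\ when all products of $M(0),M(1)$ are bounded), both the NP-hardness and the Turing-undecidability transfer to the closed-loop instance, which is the theorem.

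The main obstacle is the structural rigidity of the closed-loop matrices dictated by Proposition 1: $M(0)$ is forced to have zero bottom block-rows, while the top block-rows of $M(1)$ are forced to coincide with $(A,B)$, which moreover is shared with $M(0)$. The design trick that resolves this is to let $A$ itself encode one of the target matrices ($A=N_1$) and to take $B$ invertible ($B=I$) so that $K(0)$ has enough freedom to retune $M(0)$'s top-left block from $N_1$ to $N_0$; the shared top-right identity block and the zero bottom rows then make the block-triangular product formula above collapse cleanly to a shifted product of $N$'s.
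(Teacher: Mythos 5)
Your reduction is essentially the paper's own: the paper likewise reduces from the NP-hardness of matrix semigroup stability and the Turing-undecidability of matrix semigroup boundedness for a pair of nonnegative rational matrices, with $D=\{0,1\}$, $N=1$, $m=n$, $B=I$, and obtains $2n\times 2n$ closed-loop matrices whose blocks reproduce arbitrary products of the given pair. The only substantive difference is the embedding: the paper takes $A=0$, $K(0)=(A_1\ 0)$, $K(1)=(A_2\ 0)$, giving the pair $\left\{\begin{pmatrix}A_1&I\\0&0\end{pmatrix},\begin{pmatrix}0&I\\A_2&0\end{pmatrix}\right\}$, whereas you place $N_1$ in the plant and retune with $K(0)=(N_0-N_1,\ 0)$, which yields the uniform form $\begin{pmatrix}N_{\sigma}&I\\0&0\end{pmatrix}$ and a cleaner explicit product formula; both correctly transfer stability and boundedness. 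The one caveat concerns the theorem's last clause: under the natural reading that the matrices defining the instance (plant \emph{and} gains) are nonnegative rational, your gain $K(0)=N_0-N_1$ may have negative entries, so your construction only certifies nonnegativity of the closed-loop matrices, while the paper's choice $A=0$ keeps every input matrix nonnegative. This is easily repaired by adopting the paper's assignment -- the interleaved block products still embed arbitrary products of $\{N_0,N_1\}$, so the stability and boundedness equivalences go through exactly as in your argument.
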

\begin{proof}
Our proof works by reduction from the matrix semigroup stability, and the matrix semigroup boundedness, which are well known to be respectively NP-hard and Turing undecidable \cite[Theorem 2.4 and Theorem 2.6]{jungers_lncis}. In this problem, one is given a set of two matrices $\Sigma=\{A_1,A_2\}\subset \cq_+^{n\times n}$ ($\cq_+$ is the set of nonnegative rational numbers) and one is asked whether for any sequence $(i_t)_0^\infty,\ i_t\in [1,2],$ the corresponding product $A_{i_1}A_{i_2}\dots A_{i_T}$ converges to the zero matrix when $T\rightarrow \infty$ (respectively remains bounded when $T\rightarrow \infty$).

Let us consider a particular instance $\Sigma=\{A_1,A_2\}\in \cq_+^{n\times n}$ of the matrix semigroup stability (resp. boundedness) problem. We will build a closed loop system as follows:
\begin{eqnarray}
x_e({t+1})&=&M_i x_e(t),\quad M_i\in \Sigma',
\end{eqnarray}
where $\Sigma'$ is a set of $2n \times 2n$ matrices, and prove that the set $\Sigma'$ is stable (resp. product-bounded) if and only if $\Sigma$ is.
Our construction is as follows: we set $D=\{0,1\}$ as the set of delays and $N=1$ for the look-ahead, and we build a \switcheddelay{} with a plant characterized by internal state space dimension $n$ and input space dimension $m=n$ as follows: the system matrix is given by $ A=0,\, B = I$, and the feedback matrix, assuming that $N=1$, in block form given by $K(0)= (A_1 \ 0 )$ for $d=0$ and $K(1)= (A_2 \ 0)$ for $d=1$. Thus, the corresponding closed loop feedback switching system can be expressed from Proposition \ref{prop-delay-dependent-cl} as $$ x_e({t+1})= M(\sigma(t)) x_e(t), \quad M(\sigma(t)) \in \Sigma', $$
where \begin{equation}\label{eq-np-switching}\Sigma'= \left\{ \begin{pmatrix} A_1 & I \\0&0\end{pmatrix},\begin{pmatrix}0&I\\ A_2 &0 \end{pmatrix}\right\}.\end{equation}

Writing $x_e(t) = (x(t), u_1(t))$ we have that, depending on $\sigma(t)$, either $x_e({t+1})= (A_1x(t) + u_1(t),0)$ or $x_e({t+1})= (u_1(t), A_2x(t))$. \rmj{From this, it is straightforward to see that the set $\Sigma'$ is stable (resp. product bounded) if and only if $\Sigma$ is.} Indeed, the blocks in the products of matrices in $\Sigma'$ are arbitrary products of matrices in $\Sigma.$  This concludes the proof.
\end{proof}

\begin{remark}
It is not known (to the best of our knowledge) whether the matrix semigroup stability problem is Turing decidable (say, for matrices with rational entries). Thus, the above proof does not allow us to conclude that the \switcheddelay{} stability problem is undecidable. This is why we only claim that the stability problem is NP-hard, while the boundedness problem is provably Turing undecidable.
\end{remark}



\section{\ad{Controller design with arbitrary look-ahead: exact algorithms}} \label{secDesignDD}

In this section we address the following problem: given a system with variable delays as in \eqref{sdsystem}, design a controller \ad{with \emph{arbitrarily large} look-ahead} such that the closed loop system is stable (by \emph{arbitrarily large} we mean that $N$ is not part of the problem, but rather, one is allowed to chose a suitable value for it). This problem is challenging since very little is known in the literature about the design of switching systems. In this section, for the sake of clarity, we restrict ourself to the single input case, that is, $m=1.$ We first define the controllability notion for System \eqref{sdsystem}.


\begin{definition}\label{defControllability}
We say that the system \eqref{sdsystem} is controllable with look-ahead $N$ if, for any initial state $x_0$, any final state $x_f$ and any switching signal $\sigma$, there exists a control signal $v(t, x(t-d_{max}:t), \sigma(t-d_{max}:t+N))$ such that
$$
\exists t \geq 0: x(t)=x_f.
$$
\end{definition}


The existence of a controller as in Definition \ref{defControllability} seems to be very hard to decide: to the best of our knowledge, controller design is widely overlooked in the literature on general switching systems, and only sufficient conditions for the existence of a \emph{linear} controller are known \cite{LeeD06,sun-ge}. As a first attempt to tackle the problem for our systems with varying delays, we now assume that the controller has an arbitrarily large look-ahead knowledge of the switching signal. In this case we are able to decide controllability, and efficiently build a controller. It turns out that one can compute a finite value $N$ depending only on the dimension of the plant and the set of delays such that if the system is controllable with infinite look-ahead, it is controllable with finite look-ahead $N.$  This obviously gives conditions for controllability when the look-ahead is a fixed finite value $N':$ if the system is uncontrollable with infinite look-ahead, it is clearly uncontrollable with the actual value $N';$ if, on the other hand, the system is controllable with infinite look-ahead and $N'\geq N,$ then the actual system is controllable.  \rmj{In order to handle the fact that the look-ahead is arbitrarily large}, in the next definition, we introduce a controllability matrix as if the controller knew at time $t=0$ the infinite future sequence of signals $\sigma(t).$
\begin{definition}\label{defControllabilityMatrix}
Given a system \eqref{sdsystem} and a switching signal $\sigma(t)$ we define
$$
C_t(A,B,D,\sigma(t))
$$
the \emph{controllability matrix at time $t$}, whose columns are given by
\begin{equation}\label{eq-controllability-matrix}
\{A^{t-t'-\sigma(t')}B:\, t'\geq 0,\ t-t'-\sigma(t')\geq 0\}.
\end{equation}
The order of the columns (although not that important) is by increasing order of $t',$ so that
\begin{equation}\label{eqControllability}
x_t=A^t x_0+C_t v_t,
\end{equation}
where the components of $v_t$ are given by
$$
v_t(t')=v(t'):\, t'\geq 0,\ t-t'-\sigma(t')\geq 0,
$$
namely by all the control signals delivered to the actuator up to time $t.$
\end{definition}

By an abuse of notation, we denote by $\spann(C_t)$ the space generated by the columns of the matrix $C_t$.

\begin{proposition}\label{propContrCond}
The System \eqref{sdsystem} is uncontrollable if and only if there exists a switching signal $\sigma(t)$ such that, for all $t \in \mathbb N$, we have
\begin{equation}\label{eqContrCond}
\spann(C_t)\neq \re^n.
\end{equation}
\end{proposition}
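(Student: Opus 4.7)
The plan is to prove both directions by directly comparing the reachable set (as a function of the control sequence) with the given switching signal $\sigma$ to the affine image of the controllability matrix $C_t$ appearing in Equation \eqref{eqControllability}. Throughout, I rely on the fact that with arbitrary (indeed infinite) look-ahead, choosing a control signal is equivalent, for each fixed $\sigma$, to choosing the infinite sequence $(v(0),v(1),\ldots)$ freely in $\re$.

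For the \emph{only if} direction, I would argue by contraposition. Suppose there exists no $\sigma$ making the span of $C_t$ a proper subspace for every $t$, i.e.\ for every $\sigma$ there exists some $t=t(\sigma)$ with $\spann(C_t(A,B,D,\sigma))=\re^n$. Fix arbitrary $x_0,x_f\in\re^n$ and $\sigma$; since $C_{t(\sigma)}$ is surjective, the linear system $C_{t(\sigma)} v_{t(\sigma)} = x_f - A^{t(\sigma)} x_0$ admits a solution $v_{t(\sigma)}$, and Equation \eqref{eqControllability} then yields $x(t(\sigma))=x_f$. This realizes Definition \ref{defControllability}, hence the system is controllable.

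For the \emph{if} direction, assume there exists a witness $\sigma^{*}$ such that $\spann(C_t)\neq\re^n$ for every $t\in\n$. I pick $x_0=0$. From Equation \eqref{eqControllability}, the set of states reachable at time $t$ under $\sigma^*$ with $x_0=0$ is exactly $\spann(C_t)$, so the full reachable set is $\R:=\bigcup_{t\in\n}\spann(C_t)$. Each $\spann(C_t)$ is a proper linear subspace of $\re^n$ (dimension at most $n-1$), hence has Lebesgue measure zero; therefore the countable union $\R$ has measure zero and is a strict subset of $\re^n$. Any $x_f\in\re^n\setminus\R$ then witnesses uncontrollability via the triple $(x_0,x_f,\sigma^*)$.

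The only mildly delicate step is the backward direction, where one must rule out that the union of the $\spann(C_t)$, taken over all $t\ge0$, could exhaust $\re^n$ even though every single piece is a proper subspace. The measure-zero (or equivalently Baire-category) argument I sketched handles this cleanly; alternatively one could note that by Cayley--Hamilton the columns of $C_t$ all live in the finite-dimensional set $\{A^j B : 0\le j\le n-1\}$-span structure, so the sequence $\spann(C_t)$ becomes eventually constant in $t$ (or cycles among finitely many subspaces), making the union itself a finite union of proper subspaces — which is already a proper subset of $\re^n$. I would include this finite-subspace observation as a remark, since it also explains why the condition of Proposition \ref{propContrCond} is in principle checkable in finite time and sets up the algorithm developed in the next subsection.
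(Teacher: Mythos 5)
Your main argument is correct, and in fact the paper states Proposition \ref{propContrCond} without any proof, treating it as immediate from Equation \eqref{eqControllability}; your write-up is the natural formalization of that implicit reasoning. The forward direction (contraposition via surjectivity of $C_{t(\sigma)}$) is exactly what Equation \eqref{eqControllability} is set up for, and your backward direction correctly identifies and resolves the one genuine subtlety: the reachable set from $x_0=0$ under the witness signal $\sigma^*$ is $\bigcup_{t\in\n}\spann(C_t)$, a countable union of proper subspaces, which cannot exhaust $\re^n$ (measure zero or Baire category), so a suitable $x_f$ exists. This is consistent with the section's standing convention that the look-ahead is arbitrarily large, so that for each fixed $\sigma$ the whole sequence $v(0),v(1),\dots$ may be chosen freely.

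One caution: the ``alternative'' remark you propose to add is not correct as stated. Cayley--Hamilton does place every column of $C_t$ in $\spann\{A^jB:0\le j\le n-1\}$, but it does not follow that the sequence of subspaces $\spann(C_t)$ is eventually constant or cycles among finitely many subspaces: for invertible $A$ the dimension eventually stabilizes (since $\spann(C_{t+1})\supseteq A\,\spann(C_t)$), but the subspace itself can keep moving --- precisely the phenomenon exploited in Example \ref{ex-uncontrollable}, where rotation dynamics and large delays produce uncontrollable behaviour that no finite family of ``frozen'' spans captures; for singular $A$ even monotonicity of the dimension fails. Consequently the claim that the union is a finite union of proper subspaces is unjustified (and unnecessary, since the measure-zero argument already closes the gap), and it does not explain finite-time checkability: that is established in the paper by a different mechanism, namely the polynomial-space argument of Theorem \ref{thAlg} with the explicit bound $\binom{n+2|D|}{2|D|}$. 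Keep the measure-theoretic (or Baire) argument as the proof and drop, or substantially weaken, the proposed remark.
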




We first completely solve the design problem for one-dimensional systems with $n=m=1$, namely with $A = a \in \mathbb R$ and $B = b \in \mathbb R$. Since we assumed that $(A,B)$ is controllable, then $b \neq 0$. For a classical LTI system with fixed delay $d$, in the case where the controller has a memory of its past $d_{max}$ outputs, a solution that drives the trajectory onto the origin in finite time is given by an extension of the Ackermann formula:
\begin{equation}\label{eq-ack}
K^*(d) = (-a^{d+1}/b,-a^{d},-a^{d-1},\dots,-a).
\end{equation}
It turns out that for a \switcheddelay{} too, there is always a solution that reaches the origin in at most $d_{max}$ steps using a controller as in Definition \ref{def-dep} that \rmj{only requires the knowledge} of the current switching signal, i.e. with $N = 1$:
\begin{theorem}
Given system (\ref{sdsystem}) let $n=m=1$ and $a \neq 0$: then the system reaches the origin at latest at time $t=d_{max}+1$ using the following switching linear controller:
$$
K(d)=K^*(d_{max}) \cdot a^{d-d_{max}},
$$
with $K^*(d_{max})$ as in equation \eqref{eq-ack}.
\end{theorem}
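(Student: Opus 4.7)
The plan is to introduce the scalar ``$d_{max}$-step-ahead prediction'' of the plant state,
\[
\tilde x(t) \;:=\; a^{d_{max}} x(t) \;+\; \sum_{s=1}^{d_{max}} a^{d_{max}-s} b\, u_s(t),
\]
which is precisely the value of $x(t+d_{max})$ that would be observed if the controller stopped emitting after time $t$. The main claim is that the chosen controller is, in disguise, a deadbeat feedback on this single scalar quantity.

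First I would compute $\tilde x(t+1)$ by substituting the one-step dynamics of $x_e$ into the definition. The shift structure of $A_e$ makes the open-loop part telescope cleanly to $a\,\tilde x(t)$, and the remaining $v(t)$ contributions, weighted by the Kronecker deltas in $B_e(\sigma(t))$, collect to $a^{d_{max}-\sigma(t)} b\, v(t)$. This yields the recurrence
\[
\tilde x(t+1) \;=\; a\,\tilde x(t) \;+\; a^{d_{max}-\sigma(t)} b\, v(t).
\]
Next, factoring $-a^{d-d_{max}+1}/b$ out of $K(d)=K^*(d_{max})\, a^{d-d_{max}}$ gives $K(d)\,x_e = -\bigl(a^{d-d_{max}+1}/b\bigr)\,\tilde x$, so that $v(t) = -a^{\sigma(t)-d_{max}+1}\tilde x(t)/b$. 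Substituting this into the recurrence above makes the right-hand side vanish identically: $\tilde x(t+1)=0$ for every $\sigma(t)$ and every $x_e(t)$. In particular $\tilde x(1)=0$, and by induction $\tilde x(t)=0$ and $v(t)=0$ for all $t\geq 1$.

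Once $v\equiv 0$ from time $t=1$ on, the delay buffer obeys the pure shift $u_s(t+1) = u_{s+1}(t)$ with $u_{d_{max}}(t+1)=0$. A direct induction gives $u_{d_{max}-k}(k+2)=0$ for $k=0,\dots,d_{max}-1$, so every queued input has been drained by time $t=d_{max}+1$. At that time the defining identity reduces to $\tilde x(d_{max}+1) = a^{d_{max}} x(d_{max}+1)$, and since $\tilde x(d_{max}+1)=0$ and $a\neq 0$ we obtain $x(d_{max}+1)=0$; combined with the vanishing of all $u_s$, this yields $x_e(d_{max}+1)=0$.

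The main conceptual step is recognizing the predictor $\tilde x$ as the correct invariant and noticing that, thanks to the geometric scaling $a^{d-d_{max}}$ built into $K(d)$, the controller acts uniformly in $d$ on this single scalar. Once that is in hand, both the one-step annihilation of $\tilde x$ and the drain phase of the buffer are routine book-keeping; the only arithmetic subtlety is keeping track of the exponents of $a$ when matching the $B_e(\sigma)$ coefficient with the $K(d)$ coefficient.
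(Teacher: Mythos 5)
Your proof is correct and is essentially the paper's own argument in a cleaner packaging: your predictor $\tilde x(t)=a^{d_{max}}x(t)+\sum_{s=1}^{d_{max}}a^{d_{max}-s}b\,u_s(t)$ is just $-\tfrac{b}{a}K^*(d_{max})x_e(t)$, and your recurrence $\tilde x(t+1)=a\tilde x(t)+a^{d_{max}-\sigma(t)}b\,v(t)$ is exactly the paper's step of multiplying the Ackermann identity by $a$ and using the shift dynamics, after which both arguments conclude identically ($v\equiv 0$ after the first step, the buffer drains in $d_{max}$ steps, and $a\neq 0$, $b\neq 0$ force $x(d_{max}+1)=0$).
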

\begin{proof}
Let $x_e(t) = (x(t),u_{1}(t),u_{2}(t),\dots, u_{d_{max}}(t))$ be the state of the system. From Equation (\ref{eq-ack}) we have, at any time $t,$
\begin{eqnarray}\label{eq-ackerman-zero} \nonumber(a^{d_{max}+1}/b)x(t) + \sum_{s=1}^{d_{max}} a^{d_{max}-s+1}u_{s}(t) + K(\sigma(t))x_e(t)(a^{d_{max}-{\sigma(t)}})&=&0. \end{eqnarray}
By Definitions \ref{def-dep} and \ref{sdsystem} it follows that for $s=1,\dots,d_{max},$
\begin{eqnarray}\label{eq-us}
u_s(t+1)&=&u_{s+1}(t)\quad \mbox{if }\sigma(t)\neq s\\
\nonumber u_s(t+1)&=&u_{s+1}(t)+v(t)\quad \mbox{if }\sigma(t)= s,
\end{eqnarray}
where we fix for conciseness of notations that $u_{d_{max}+1}=0.$ Observe that the last term in the left-hand side of Equation (\ref{eq-ackerman-zero}) is equal to $v(t)(a^{d_{max}-{\sigma(t)}}).$ Multiplying that equation by $a,$ and making use of Equation \eqref{eq-us}, we obtain:
\begin{align*}
0 =& a^{d_{max}+1}/b(ax(t)+bu_1(t)+bz(t))+\sum_1^{d_{max}}{a^{d_{max}+1-s}u_s(t+1)}\\
=& (a^{d_{max}+1}/b)x({t+1})+\sum_1^{d_{max}}{a^{d_{max}+1-s}u_s(t+1)} = K^*(d_{max})x_e(t+1) = v({t+1})\ a^{d_{max}-{\sigma(t+1)}},
\end{align*}
where, again for conciseness, we introduce the variable $z$ such that $z(t)=v(t)$ if $0 \in D$ and $\sigma(t)=0$, and $z(t)=0$ otherwise. In conclusion, if the controller is applied at time $1,$ the output of the controller at time $2$ is $v_{2}=0.$ Thus, by induction, $\forall t'>1,\, v({t'})=0$. This implies (see Equation (\ref{eq-ud})) that $\forall t''>d_{max},\forall s,\,u_s{(t'')}=0.$ In turn, since $v_{d_{max+1}}= K^*(\sigma(d_{max}+1))x_e(d_{max}+1)=0,$ this implies that $x_{d_{max}+1}=0.$
\end{proof}
The following example shows that the design problem is not trivial as soon as the dimension $n$ of the plant state is equal to $2$ even if $m=1$.
\begin{ex}
Consider a \switcheddelay{} with the following parameters:
$$
A=\begin{pmatrix}0& 2\\2& 0\end{pmatrix},\ b=\begin{pmatrix}0& 1\end{pmatrix}^T,\ D=\{0,1\},\ \sigma(t)=t \mbox{ mod } 2.
$$
That is, $\sigma(t) = 0$ when $t$ is even, and $1$ when $t$ is odd. One can show by induction that, if $x(0)=(1, 0)$, for any even time $t$, $x_1(t)=2^t$. As a consequence the system is uncontrollable even though the pair $A,b$ is controllable.
\end{ex}

Motivated by the example above, we investigate in the following theorems the controllability property for System (\ref{sdsystem}):

\begin{definition}
We say that a matrix $A\in \re^{n\times n}$ is a \emph{block \rmj{cyclic} permutation of order $p$} if there exists a block-partition of the entries such that $A$ acts as a cyclic permutation on these blocks:
\small
\begin{align*}
\exists n_0=0< n_1<\dots <n_p= n : \left(\forall 1\leq i\leq j \leq n,\, A_{i,j}\neq 0 \Rightarrow \exists s: n_{s-1}< i\leq n_{s}<j\leq n_{s+1}\mbox{ or } i>n_{p-1},j \leq n_1\right).
\end{align*}
\normalsize
We say that $B\in \re^{n\times m}$ has a \emph{zero-block of index $n_k$} (w.r.t. the blocks defined above) if
$$
\forall n_{k-1}<i\leq n_k, \ 1\leq j\leq m,\; B_{i,j}=0.
$$
\end{definition}

\begin{theorem}\label{thm-block-permut}
Suppose that $A$ can be put in a block cyclic permutation form in some basis, and $b$ has one or more zero blocks in the same basis.  Let us denote by $Z$ the set  of indices of the zero blocks in $b.$  Then, the system is uncontrollable provided that \begin{equation}
\forall x\in [1,p],\, \exists z\in Z, d\in D: \, x= z-d\, \mbox{mod }p,
\label{eq-thm-blocks}
\end{equation} where $p$ is the order of the block-permutation.
\end{theorem}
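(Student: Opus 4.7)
My plan is to apply Proposition \ref{propContrCond} by exhibiting a single adversarial switching signal $\sigma$ for which $\spann(C_t) \subsetneq \re^n$ at every time $t \geq 0$. I work throughout in the basis where $A$ has the block cyclic permutation form, and write $\re^n = V_1 \oplus \cdots \oplus V_p$ for the corresponding decomposition; the action of $A$ cyclically permutes the indices of these blocks by one position, so that the iterate $A^k$ shifts a block index by $k$ modulo $p$.

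First, I would translate the zero-block structure of $b$ into a combinatorial statement about the columns of $C_t$: because $b$ vanishes on the blocks indexed by $Z$, each column $A^k b$ is supported exactly in the blocks obtained by shifting $[1,p] \setminus Z$ by $k$ modulo $p$. Hence a block $V_x$ is disjoint from the support of $A^k b$ if and only if $x+k \bmod p$ lies in $Z$ (the sign of the shift is fixed by the cyclic convention of $A$ but does not affect the structure of the argument). Producing an obstruction to controllability at time $t$ therefore amounts to exhibiting an index $x^*(t) \in [1,p]$ that is missed by every column of $C_t$.

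Second, I would use hypothesis \eqref{eq-thm-blocks} to design $\sigma$. For each residue $r \in \{0,1,\dots,p-1\}$ the hypothesis provides $z(r) \in Z$ and $d(r) \in D$ satisfying $r \equiv z(r) - d(r) \bmod p$. I then define $\sigma$ as the periodic sequence $\sigma(t') := d(t' \bmod p) \in D$. A direct computation gives
\begin{equation*}
t' + \sigma(t') \equiv z(t' \bmod p) \bmod p \in Z,
\end{equation*}
so that every exponent $k = t - t' - \sigma(t')$ appearing as the power of $A$ in a column of $C_t$ satisfies $k \equiv t - z(t' \bmod p) \bmod p$. In other words, $K_t \bmod p \subseteq t - Z \bmod p$.

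Third, I would extract the obstructed block $x^*(t)$. Writing $\tilde Z \subseteq Z$ for the image of the map $r \mapsto z(r)$, any residue $y$ in the intersection $\bigcap_{z \in \tilde Z}(z + Z)$ yields $x^*(t) := y - t \bmod p$ with $x^*(t) + k \bmod p \in Z$ for every $k \in K_t \bmod p$. Consequently $V_{x^*(t)}$ lies outside the support of every column of $C_t$, giving the strict inclusion $\spann(C_t) \subseteq \bigoplus_{s \neq x^*(t)} V_s \subsetneq \re^n$ required by Proposition \ref{propContrCond}. The step I expect to be most delicate is producing such a residue $y$: one must show that $\bigcap_{z \in \tilde Z}(z + Z)$ is nonempty. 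The pair $(z(r),d(r))$ supplied by the hypothesis is generally not unique, and this residual freedom must be used to tune $\tilde Z$ so that the intersection is guaranteed. Verifying this, and checking that the transient behaviour of $K_t$ for small $t$ is consistent with the asymptotic description above, is the main technical content of the argument.
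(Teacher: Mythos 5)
Your overall plan is the same as the paper's (an adversarial periodic signal built from \eqref{eq-thm-blocks}, a common missed block at every time $t$, then Proposition \ref{propContrCond}), but the step you defer is not a technicality --- it is the whole theorem, and in the form you set it up it cannot be completed. With your orientation convention (zero blocks of $A^k b$ at the indices $Z-k$) and your choice of signal (arrival times congruent to elements $z(r)\in Z$), uncontrollability at large $t$ requires precisely a residue $y\in\bigcap_{z\in\tilde Z}(z+Z)$ with $\tilde Z\subseteq Z$ satisfying $\tilde Z-D=\{0,\dots,p-1\}$ mod $p$; but hypothesis \eqref{eq-thm-blocks} does not imply such a $y$ exists, and no amount of ``tuning'' of the pairs $(z(r),d(r))$ can force it. Concretely, take $p=7$, $Z=\{1,2,4\}$ (the classical difference set mod $7$) and $D=\{0,1,3\}$: then $Z-D$ covers all residues, so \eqref{eq-thm-blocks} holds, yet any admissible $\tilde Z$ must have at least $\lceil 7/|D|\rceil=3$ elements, hence $\tilde Z=Z$, and $(1+Z)\cap(2+Z)\cap(4+Z)=\{2,3,5\}\cap\{3,4,6\}\cap\{1,5,6\}=\emptyset$. (The transient issue you also flag is, by contrast, harmless: for small $t$ the intersection runs over fewer elements of $\tilde Z$.)

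What is missing is the key idea of the paper's proof: do not look for a block missed by all columns through a static intersection over $\tilde Z$; track a common zero block whose \emph{index moves in lockstep with the cyclic shift}. The sign in \eqref{eq-thm-blocks} is tied to the orientation of the block permutation, and under the orientation for which the stated hypothesis is the right one, the zero blocks of $A^k b$ sit at $Z+k$, not $Z-k$. With that matching convention, choose at each emission time $t'$ (by \eqref{eq-thm-blocks} applied to $x\equiv s+t'$) a delay $d$ and a $z\in Z$ with $z-d\equiv s+t'$; then the injected vector $b$ has, at its arrival time $t''=t'+d$, a zero block at index $z\equiv s+t''$, and after $j$ further steps the column $A^j b$ has a zero block at index $z+j\equiv s+t$ for the current time $t=t''+j$. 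Hence every column of $C_t$ vanishes on the single block of index $s+t \bmod p$, so $\spann(C_t)\neq\re^n$ for all $t$ and no intersection condition is needed --- indeed, in your own normalization (arrival times $\equiv z(r)$) the common index is simply $t \bmod p$, obtained by taking the element $z(r)$ itself of $Z$. Had you kept your sign, the hypothesis needed would instead read $x=z+d \bmod p$, which is exactly why your computation produced a nontrivial intersection condition rather than something automatic; once the convention and the hypothesis are aligned, your periodic signal works and the proof closes in one line.
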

\begin{proof}
We simply present a sequence of delays that satisfies the property that at any time $t,$ all the vectors in the controllability matrix have a zero block at index $t+s \  \mbox{mod } p,$ where $s\in [1,p]$ can be chosen arbitrarily. At any time $t,$ define $\sigma(t)$ such that $z-\sigma(t)=s+t\  \mbox{mod } p$, for some $z\in Z.$  Thus, the vector $b$ will appear in the controllability matrix at time $t+d,$ and indeed at that time its block at index $t+d+s \ \mbox{mod } p$ will be zero, since $t+s+d =z\  \mbox{mod } p.$ Now, this vector will appear at time $t'>t+d$ in the controllability matrix as $A^{t'-t-d}b.$ This vector has a zero block at index $z+t'-t-d=t'+s\  \mbox{mod } p,$ and hence will never violate the property.  Since we took the time $t$ arbitrarily, no vector in the controllability matrix will \rmj{ever} violate the property.
\end{proof}
The above theorem is easily generalizable to the case where $A$ is a permutation but not a cyclic one, and there is a cyclic subpermutation that satisfies the hypotheses.
One could now conjecture that Theorem \ref{thm-block-permut} actually characterizes all uncontrollable systems: the following example shows that there are more involved situations where the matrices {do not satisfy the hypotheses of the theorem above, but still the system is uncontrollable.  Observe indeed that if one restricts himself to delays smaller or equal to $n$ (like in the theorem above), the system in Example \ref{ex-uncontrollable} is controllable, but uncontrollability can be obtained with large delays.}
\begin{ex}\label{ex-uncontrollable}
Consider a \switcheddelay{} with the following parameters:
$$
A=
\begin{pmatrix}
    \sin\theta_1& -\cos\theta_1&  0&       0\\
    \cos\theta_1& \sin\theta_1&  0&       0\\
    0&       0&       \sin\theta_2& -\cos\theta_2\\
    0&       0&       \cos\theta_2& \sin\theta_2
\end{pmatrix},\
b=
\begin{pmatrix}
1\\ 1\\ 1\\ 1
\end{pmatrix}, \
$$
and $D=\{0,1,\ldots,121\}$, with $\theta_1 = \frac{\pi}{120}$ and $\theta_2 = \frac{\pi}{60}$. Note that $A$ has 2 pairs of complex conjugate eigenvalues $\lambda_1, \lambda_1^*$ and $\lambda_2, \lambda_2^*$ characterized by absolute values $|\lambda_1| = |\lambda_2| = 1$ and phases $\angle(\lambda_1) = \theta_1$, $\angle(\lambda_2) = \theta_2$. As a consequence $\lambda_1$ and $\lambda_1^*$ have equal phases every $80, 160, 240, \ldots$ steps and they have opposite phases every $120, 360, 600, \ldots$ steps. Because of this property the system is uncontrollable: indeed, given the switching signal
$$
\sigma(t) = \left\{
\begin{array}{cl}
  0 & \text{if }\ 0 \leq t \leq 2\\
  121 - t\ mod(121) & \text{if }\ t \geq 3
\end{array}
\right. ,
$$
it is easy to check that the system is uncontrollable, namely $\forall t > 0, rank(C_{t}(A,B,D, \sigma(t))) < 4$. Moreover, \rmj{$\forall 1<t_1<t_2<t_3 < 121, rank([b\ A^{t_1} b\ A^{t_2} b\ A^{t_3} b]) = 4$}.

\rmj{By deriving similar examples with different values of $ \theta_i,$ one can build $4$-dimensional systems which become uncontrollable only if $D$ contains arbitrarily large delays.  This shows that Theorem \ref{thm-block-permut} does not characterize all uncontrollable systems, because this theorem considers systems that are uncontrollable with delays that are smaller than the dimension of the system. }
\end{ex}

Motivated by the above example we provide an algorithm that terminates in finite time for controllability verification (when arbitrarily large look-ahead of the switching signal is allowed). We first prove that the controllability verification problem can always be split into two sub-problems, one characterized by a regular matrix and one by a nihilpotent matrix. Then we provide an exponential time verification algorithm for regular matrices and a polynomial time algorithm for nihilpotent matrices.


\subsection{Problem split into regular and nihilpotent cases}

\begin{lemma}
If the matrix $A$ has more than one Jordan block with eigenvalue zero, the system is uncontrollable.
\end{lemma}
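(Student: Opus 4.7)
The plan is to produce a nonzero vector $w \in \re^n$ with the property that $w^T x(t) = 0$ for every $t \geq 1,$ uniformly in the initial state $x_0,$ the switching signal $\sigma,$ and the applied control $v.$ Once such a $w$ is in hand, Definition \ref{defControllability} is violated by taking $x_0 = 0$ and any $x_f$ with $w^T x_f \neq 0$: no matter what $\sigma$ and control sequence are used, the trajectory stays in $w^\perp,$ while $x_f \notin w^\perp,$ so $x_f$ is never reached.

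The crucial observation that makes the argument uniform in $\sigma$ is that, by Definition \ref{defControllabilityMatrix}, the state decomposes as $x(t) = A^t x_0 + C_t v_t$ with every column of $C_t$ of the form $A^k b$ for some $k \geq 0.$ Hence, for any $\sigma$ and any $v$, the increment $C_t v_t$ lies in the classical cyclic subspace
$$
\mathcal{C}(A,b) \;:=\; \spann\{A^k b : k \geq 0\} \;=\; \spann\{b, Ab, \ldots, A^{n-1}b\},
$$
the last equality holding by Cayley-Hamilton. Consequently it is enough to find $w \neq 0$ that is left-orthogonal to $\mathcal{C}(A,b)$ \emph{and} satisfies $(A^T)^t w = 0$ for every $t \geq 1.$ Both conditions are implied by the two algebraic constraints $A^T w = 0$ and $w^T b = 0$: the first gives $(A^T)^t w = 0$ for $t \geq 1$ together with $w^T A^k b = 0$ for $k \geq 1,$ and the second closes the remaining $k = 0$ case. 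This is essentially a Popov-Belevitch-Hautus witness of non-controllability at the eigenvalue $0.$

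The hypothesis on Jordan blocks is exactly what guarantees the existence of such a $w.$ The number of Jordan blocks of $A$ attached to the eigenvalue $0$ equals the geometric multiplicity of $0,$ hence equals $\dim \ker A^T;$ by assumption this dimension is at least $2.$ The extra linear constraint $w^T b = 0$ then leaves a subspace of $\ker A^T$ of dimension at least $1,$ from which a nonzero $w$ can be selected. I do not expect any substantial technical obstacle beyond this dimension count; the only point to keep in mind is that both conditions ($A^T w = 0$ and $w^T b = 0$) are needed separately, since a generic left null vector of $A$ is not automatically orthogonal to $b.$ The conclusion then follows by direct substitution: $w^T x(t) = w^T A^t x_0 + w^T C_t v_t = 0$ for all $t \geq 1.$
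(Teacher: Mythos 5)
Your proof is correct and follows essentially the same route as the paper: both arguments rest on the observation that every column of $C_t$ is a power $A^k b$, so controllability with switched delays can fail only because the classical single-input pair $(A,b)$ is already uncontrollable when $A$ has two Jordan blocks at eigenvalue $0$. The paper simply asserts this last fact ("even the full controllability matrix cannot be full-rank"), while you make it explicit with a PBH-type witness $w\in\ker A^T$ with $w^Tb=0$; note that your dimension count uses $m=1$, which is exactly the standing assumption of this section.
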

\begin{proof}
This is already the case for systems with constant delay.  It is easy to see that in that case, even the full controllability matrix $[b\  Ab\  A^2b\ \dots \ A^tb]$ cannot be full-rank, even for large $t.$
\end{proof}

\begin{lemma}
Suppose that the matrix $A$ has one Jordan block of size $k$ with eigenvalue zero.  That is, there is an invertible matrix $T$ such that $$ TAT^{-1}=\begin{pmatrix}
	J_{0,k}&0\\0 &A'&
\end{pmatrix},\quad Tb=\begin{pmatrix}
	b_0\\b'
\end{pmatrix}.$$
Then, the system \eqref{sdsystem} is uncontrollable if and only if the system $(J_{0,k},b_0)$ is uncontrollable with the set of delays $D,$ or the system $(A',b')$ is uncontrollable with the set of delays $D.$
\end{lemma}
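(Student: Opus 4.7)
The plan is to work in the block-diagonal basis where $A = \mathrm{diag}(J_{0,k}, A')$ and $b = (b_0, b')^T$, so that each column of the controllability matrix $C_t$ of Definition \ref{defControllabilityMatrix} decomposes as $A^s b = (J_{0,k}^s b_0, A'^s b')^T$ for $s$ in the set of ``elapsed delays''
$$S_t(\sigma) := \{t - t' - \sigma(t') : t' \geq 0,\ t - t' - \sigma(t') \geq 0\}.$$
I will rely throughout on two structural facts: $A'$ is invertible, since the unique Jordan block of $A$ with eigenvalue $0$ is $J_{0,k}$; and classical controllability of $(J_{0,k}, b_0)$, which is a prerequisite for delay-controllability, forces $b_0$ to be cyclic for $J_{0,k}$, so that $\{J_{0,k}^s b_0\}_{s=0}^{k-1}$ is a basis of $\mathbb{R}^k$.

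For the ``only if'' direction I would argue by contrapositive: if, say, $(J_{0,k}, b_0)$ is uncontrollable with $D$, there is a witness $\sigma^*$ such that $\spann(C_t^{(1)}(\sigma^*)) \subsetneq \mathbb{R}^k$ for every $t$. Projecting $\spann(C_t(\sigma^*))$ onto the first block yields $\spann(C_t(\sigma^*)) \subsetneq \mathbb{R}^n$ for every $t$, so by Proposition \ref{propContrCond} the full system is uncontrollable; the case of $(A', b')$ uncontrollable is symmetric.

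For the ``if'' direction I fix an arbitrary $\sigma$ and aim to exhibit a single time $t^*$ with $\spann(C_{t^*}) = \mathbb{R}^n$. Two auxiliary facts drive the argument. \emph{Monotonicity for the regular part:} since $A'$ is invertible, $\spann(C_{t+1}^{(2)}) \supseteq A' \cdot \spann(C_t^{(2)})$, and once $\spann(C_{t_2}^{(2)}) = \mathbb{R}^{n-k}$ for some $t_2$, the same holds for all $t \geq t_2$. \emph{Recurrence for the nilpotent part:} applying the hypothesis that $(J_{0,k}, b_0)$ is controllable with \emph{every} signal to the shifted signal $\sigma'(\cdot) = \sigma(\cdot + T)$, and translating indices back via the inclusion $S_{t}(\sigma') \subseteq S_{t+T}(\sigma)$, yields a nilpotent-controllable time for $\sigma$ of value at least $T$; since $T$ is arbitrary, such times are cofinal in $\mathbb{N}$.

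Picking $t^*$ to be a nilpotent-controllable time with $t^* \geq t_2 + k$, I would then analyze the left null space of $C_{t^*}$: for $(w_1, w_2)$ in this null space and $s \in S_{t^*}$ with $s \geq k$, the equation $w_1^T J_{0,k}^s b_0 + w_2^T A'^s b' = 0$ collapses (because $J_{0,k}^s b_0 = 0$) to $w_2^T A'^s b' = 0$, which via the substitution $s = s' + k$ and invertibility of $A'^k$ is equivalent to $\spann(C_{t^* - k}^{(2)}) = \mathbb{R}^{n-k}$; this holds by monotonicity since $t^* - k \geq t_2$, so $w_2 = 0$. The remaining equations for $s < k$ reduce to $w_1^T J_{0,k}^s b_0 = 0$ on a set of $s$ containing $\{0, \ldots, k-1\}$, forcing $w_1 = 0$ by cyclicity. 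The hard step will be the recurrence claim: without exploiting that controllability is assumed for \emph{every} $\sigma$ (which lets us apply the hypothesis to arbitrary shifts of $\sigma$), a single nilpotent-controllable time $t_1$ could fall short of $t_2 + k$ and the two ``clocks'' would fail to align.
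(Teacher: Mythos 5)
Your proof is correct and follows essentially the same route as the paper's: block decomposition in the Jordan basis, monotonicity of the lower-block span via invertibility of $A'$, and application of the controllability hypothesis for $(J_{0,k},b_0)$ to a shifted signal so as to obtain a nilpotent-controllable time beyond the time at which the lower block has become full. Your explicit alignment $t^*\geq t_2+k$ together with the left-null-space computation carefully justifies the final step that the paper states only tersely (namely that full rank of both blocks at a common time indeed yields $\spann(C_{t^*})=\re^n$); the only slip is cosmetic, in that your ``if'' and ``only if'' labels are swapped relative to the statement, though both implications are in fact proved.
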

\begin{proof}
Let us consider the system in the Jordan basis.  If the pair $(A',b')$ is uncontrollable with the set of delays $D,$ then the full system will not be controllable, as the lower block (that is, the last $n-k$ columns, corresponding to the invertible part of $A$) of the controllability matrix will never be of rank $n-k$ {if the switching signal is chosen to be any uncontrollable switching signal for the pair $(A',b').$}\\
Suppose now that the pair $(A',b')$ is controllable with the set of delays $D.$ We {\bf claim} that the whole system is uncontrollable if and only if the pair $(J_{0,k},b_0)$ is {un}controllable with the set of delays $D.$  Indeed, since $A'$ is invertible, if there is a time \rmj{$t^*$ such that the lower block of the controllability matrix is of rank $n-k,$ this will be the case for all $t>t^*.$ Thus, if the pair $(J_{0,k},b_0)$ is controllable with the set of delays $D,$ this means that for any switching signal $\sigma_0(\cdot)$, there will be a time at which the controllability matrix for the pair $(J_{0,k},b_0)$ is full-rank. Since for $t>t^*,$ the upper block of the controllability matrix contains all the columns corresponding to the signal $\sigma_0(t-t^*)=\sigma(t)${, then} there is a time $t>t^*$} at which the upper block, and thus both blocks, of the controllability matrix will be full rank.\\
Conversely, if the pair  $(J_{0,k},b_0)$ is uncontrollable, then there is a switching signal which makes the upper block of the controllability matrix not full rank, and thus the full controllability matrix cannot be full rank either with the same switching signal.
\end{proof}


\subsection{\rmj{The regular case}}

We provide a procedure (Algorithm \ref{algo-controllability}) that {checks in a finite number of steps whether a system (\ref{sdsystem})} with a regular matrix $A$ is controllable and computes the set of all switching signals that make the system uncontrollable. Theorem \ref{thAlg} formally proves the correctness of Algorithm \ref{algo-controllability}. {Our proofs mainly study the dimension of the linear subspace spanned by the controllability matrix, and as a consequence the ideas are generalizable to multiple outputs.} In the following, \rmj{$\begin{pmatrix}n\\k\end{pmatrix}$ denotes the binomial coefficient, as customary}.  \rmj{The main idea of the algorithm is to find a particular subspace $S,$ whose existence is a certificate for uncontrollability.  The definition of subspace $S$ is given in Eq. \eqref{eq-claim-mainthm} below.  We will show in the next theorem that $S$ is actually defined by a \emph{finite} sequence of delays $\sigma=d_1,d_2,\dots, d_{N^*},$ where $N^*$ is an efficiently computable number.  Thus, the algorithm that we present now generates all possible prefixes of such sequences which possibly verify Condition \eqref{eq-claim-mainthm}, until it reaches the bound $N^*.$  It is probably possible to slightly improve the efficiency of the algorithm, but we doubt that it could lead to a polynomial time algorithm.  We defer the study of this question to further work.}

\begin{algorithm}\label{algo-controllability}

\KwData{A triplet $A,b,D$ defining an LTI system with switched delays}
\KwResult{Outputs 'YES' if the system is controllable; outputs 'NO' if the system is uncontrollable\;
\Begin{
\nl Start preprocessing (check {that the matrix is regular})\;
\nl Set $\Sigma := \{\epsilon\}$\;
 \emph{$\Sigma$ is the set of signals possibly extendable to an infinite uncontrollable switching signal. $\epsilon$ is the empty signal. Beware that at that point $\Sigma \neq \emptyset$}\;
\nl $S_\epsilon := \mbox{span}(b)$\;
\nl $t:=0$\;
\nl \While{$\Sigma_t \neq \emptyset$ and $t\leq \begin{pmatrix}n+2|D|\\2|D|\end{pmatrix}$} {
\nl $t:=t+1$\;
\nl $\Sigma_{t} := \emptyset$ \;
\nl \For{each $\sigma \in \Sigma$}{
\nl \For{each $d \in D$}{
\nl \eIf{$A^{-t}b \in (A^{d}S_\sigma)$}{
\nl $\Sigma_{t+1} := \Sigma_{t+1}\bigcup \sigma d$  \quad \emph{($\sigma d$ is the concatenation of $\sigma$ and $d$)}\;
\nl $S_{\sigma d}:=S_{\sigma }$ \;}{
\nl \If{$\spann(S_\sigma \bigcup A^{-d-t}b )\neq \re^n$}{
\nl $\Sigma_{t+1} := \Sigma_{t+1}\bigcup \sigma d$ \;
\nl $S_{\sigma d}:=\spann(S_{\sigma }\bigcup A^{-d-t}b )$\;
}}}}
}
\nl \eIf{$\Sigma_t=\emptyset$}{\nl output 'YES'\;}{\nl output'NO'\;}}}
\caption{The general algorithm}
\end{algorithm}
\normalsize


\begin{theorem}\label{thAlg}
Let $A,b,D$ be respectively a transition matrix, a control input, and a set of delays describing an LTI system with switched delays. {If the matrix $A$ is regular,} Algorithm \ref{algo-controllability} stops in finite time and outputs `YES' if and only if the system is controllable. If the algorithm outputs 'NO`, it also returns the list of all switching signals that make the system uncontrollable.
\end{theorem}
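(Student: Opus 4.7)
The plan is to prove four claims in sequence: (i) a semantic invariant relating $\Sigma$ and the family $\{S_\sigma\}$ to the controllability matrices $C_t$ of Definition \ref{defControllabilityMatrix}; (ii) correctness of 'YES' outputs; (iii) correctness of 'NO' outputs via a pigeonhole argument over a finite type-space bounded by $N^* := \binom{n+2|D|}{2|D|}$; and (iv) that the surviving prefixes capture all uncontrollable signals.

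First I would prove, by induction on the loop counter $t$, the invariant that at the end of iteration $t$ the set $\Sigma$ is exactly the collection of prefixes $\sigma\in D^{t}$ along which $\spann(C_{t'}(A,b,D,\sigma)) \neq \re^n$ for every $t'\leq t$, and moreover $S_\sigma = A^{-t}\spann(C_t(A,b,D,\sigma))$. The base case is immediate from the initialization (lines 2--3). For the induction step, extending $\sigma$ by a delay $d$ appends to $C_t$ the columns of the form $A^{(t+1)-t'-\sigma(t')}b$ that become active between times $t$ and $t+1$; after normalising by $A^{-(t+1)}$, these are the vectors $A^{-(t+d)}b$ together with any pending arrivals from earlier emissions that mature at time $t+1$. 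Because $A$ is regular (invertible), the full-rank condition on $C_{t+1}$ is equivalent to the full-rank condition on $A^{-(t+1)}C_{t+1}$, and the tests on lines 10--15 are precisely the membership and span conditions that decide whether the extension remains in a proper subspace.

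Given the invariant, the 'YES' case is immediate: if $\Sigma=\emptyset$ at some $t$, every prefix has already spanned $\re^n$ by time $t$, so by Proposition \ref{propContrCond} the system is controllable. For the 'NO' case, assume $\Sigma\neq\emptyset$ when $t$ reaches $N^*$; I would extract an infinite uncontrollable signal by pigeonhole. To each prefix $\sigma$ of length $t$ I attach a \emph{type}, namely the pair $(S_\sigma,P_\sigma)$, where $P_\sigma := \{t' + \sigma(t') - t : t' < t,\ t' + \sigma(t') > t\}$ is the (multi)set of pending arrival offsets, a subset of $\{1,\ldots,d_{\max}\}$. Two prefixes of possibly different lengths but with the same type admit interchangeable continuations, producing identical future subspace trajectories (modulo the global shift $A^{-t}$). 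Applied to any surviving prefix of length $N^*$, this identifies two truncations at distinct times $t_1 < t_2$ with the same type, and repeating the block between them indefinitely yields an infinite signal $\sigma^*$ under which $\spann(C_t(A,b,D,\sigma^*)) \neq \re^n$ for all $t$, certifying uncontrollability. The converse (completeness of the output list) follows from the invariant: any infinite uncontrollable signal has all its prefixes in the respective $\Sigma_t$ and therefore contributes a periodic representative to the output.

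The main obstacle is the combinatorial lemma bounding the number of types by $N^*$. The subspace component $S_\sigma$ takes only finitely many values: by Cayley--Hamilton applied to $A^{-1}$, the whole family $\{A^{-k}b : k\geq 0\}$ lies in $\spann\{b,A^{-1}b,\dots,A^{-(n-1)}b\} = \re^n$, so every $S_\sigma$ is determined by a subset of a finite basis. The pending component $P_\sigma$ likewise has bounded complexity, since at any moment at most $|D|$ past emissions can still be in flight, each carrying a delay drawn from $D$. The delicate point, which I expect to be the heart of the proof, is reconciling these two counts to produce exactly $\binom{n+2|D|}{2|D|}$, presumably by encoding each type as a non-decreasing integer sequence of length $n$ with entries in $\{0,1,\dots,2|D|\}$ and counting lattice points in the resulting simplex. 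Once this encoding is in place, the pigeonhole argument closes the proof and finishing the termination/correctness guarantees of Algorithm \ref{algo-controllability}.
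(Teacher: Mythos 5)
Your structure (loop invariant, immediate ``YES'' direction, and a finiteness argument to upgrade survival up to $N^*=\binom{n+2|D|}{2|D|}$ steps into an infinite uncontrollable signal) matches the paper up to the crucial step, but the finiteness argument you propose is where the proof genuinely breaks. Your pigeonhole over types $(S_\sigma,P_\sigma)$ assumes that two surviving prefixes of different lengths with the same type ``admit interchangeable continuations, producing identical future subspace trajectories (modulo the global shift $A^{-t}$).'' That is false: the extension test at time $t$ is whether $A^{-t-d}b$ lies in the current subspace, so the future of a branch depends on the absolute time $t$ through the vectors $A^{-t}b$, which in general never recur (take $A$ an irrational rotation, as in Example~\ref{ex-uncontrollable}). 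Having the same subspace $S$ at two different times therefore does not allow you to splice and repeat the intervening block, and no finite ``type space'' of the kind you describe exists. Relatedly, your claim that $S_\sigma$ takes only finitely many values ``by Cayley--Hamilton applied to $A^{-1}$'' is incorrect: Cayley--Hamilton says each $A^{-k}b$ lies in $\mathrm{span}\{b,A^{-1}b,\dots,A^{-(n-1)}b\}$, but the subspaces spanned by subsets of the infinite family $\{A^{-j}b\}$ need not come from any finite list. You flag the counting of types as the ``delicate point'' you have not resolved; in fact it cannot be resolved along these lines, so the ``NO'' direction (and hence the bound $N^*$, and Corollary~\ref{corMinimumN}) is unproved in your proposal.

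The paper's mechanism for this step is different and is the real content of the theorem. One first shows that uncontrollability is equivalent to the existence of a single proper subspace $S\ni b$ such that for every $s>0$ there is $d\in D$ with $A^{-s-d}b\in S$ (condition \eqref{eq-claim-mainthm}); this uses regularity of $A$ and maximality of the dimension of $\mathrm{span}(C_t)$. Then, to pass from ``the condition holds for $s=1,\dots,N^*$'' to ``it holds for all $s$,'' one encodes the union $\bigcup_{d\in D}A^dS$ as the zero set of a polynomial $p$ of degree at most $2|D|$ (sum of squares of linear forms for each subspace, product over $d\in D$), and observes that the map $L_A:p(x)\mapsto p(A^{-1}x)$ is linear on the $\binom{n+2|D|}{2|D|}$-dimensional space of such polynomials; if the iterates $L_A^sp$ annihilate $b$ for $s$ up to that dimension, the Krylov span stabilizes and they annihilate $b$ for all $s$. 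This is why the binomial coefficient appears: it is the dimension of the space of $n$-variate polynomials of degree at most $2|D|$, not a lattice-point count of combinatorial types. Without this (or some substitute) your argument does not close, so the proposal has a genuine gap at the heart of the proof.
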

\begin{proof}
The algorithm obviously terminates after at most $\begin{pmatrix}n+2|D|\\2|D|\end{pmatrix}$ steps.\\
We first observe that if the system is uncontrollable, there must exist a nontrivial subspace $S\subset \re^n,\, S\neq \re^n,$ such that \begin{itemize} \item $b\in S,$ \item there exists a switching signal $\sigma(\cdot{})$ satisfying \begin{equation}\label{eq-S}S=\spann(C_t(A,b,D,\sigma(\cdot)));\end{equation} \item there is no $t'>t$ such that $\spann(C_{t'}(A,b,D,\sigma(\cdot)))$ has a larger dimension than $S$.\end{itemize}  Indeed, if $\sigma(\cdot)$ is the uncontrollable switching signal, there is no $t$ such that $\spann(C_t(A,b,D,\sigma(t)))=\re^n,$ meaning that the dimension of this space must reach a maximum integer smaller than $n$ for some $t.$ Also, from the definition of $C_t,$ we have that $b$ is the last column of $C_t$ at every time $t$ such that $t=t'+\sigma(t')$ for a certain $t'<t.$ Thus, for these $t,$ $b\in S.$

It turns out that one can say much more about $S:$ We {\bf claim} that there is such a subspace $S$ such that for all $s\in \n,$ there exists a particular delay in $D,$ which we note $d'(s)\in D,$ such that $$A^{-d'(s)-s}b\in S. $$  Indeed, let us fix $t$ such that Equation (\ref{eq-S}) is satisfied for our maximal-dimension subspace $S.$ Observe that for any $t'>0,$ $$C_{t+t'}=[A^{t'}C_t(A,b,D,\sigma(\cdot))| C_{t'}(A,b,D,\sigma(\cdot+t))] .$$  Since $S$ has the largest possible dimension and $A$ is regular, $\spann(C_{t+t'})=\spann(A^{t'}C_t)=A^{t'}S,$ and thus for all columns $c$ of $ C_{t'}(A,b,D,\sigma(\cdot+t)),$ we have $c \in A^{t'}S.$ In particular, for $t''\leq t'-d_{max},$ we have a column \begin{equation}\label{eq-column}c=A^{t'-t''-\sigma(t+t'')}b\in A^{t'}S.\end{equation} Recall that we have chosen $t'$ arbitrarily. Thus, for any $t''>0,$ we can take $t'>t''+d_{max},$ so that (\ref{eq-column}) holds, and this implies by regularity of $A$ that \begin{equation}\label{eq-claim-mainthm} \forall s>0,\exists d'(s)\in D: \, A^{-s-d'(t+s)}b\in S, \end{equation} which was the claim.

Equation \eqref{eq-claim-mainthm} allows us to define a signal $\sigma^*(s)=d(s)$ which makes the system uncontrollable.  To see this, consider an arbitrary time $t',$ and remark that any column of the controllability matrix satisfies the following equation $$c\in C_t \rightarrow \exists s>0,s+\sigma^*(s)\leq t, \, c=A^{t-s-\sigma^*(s)}b.$$  The last equation together with \eqref{eq-claim-mainthm} imply that all columns of $C_t$ are in $A^tS,$ and thus they cannot span $\re^n.$\\ Thus, Equation \eqref{eq-claim-mainthm} is satisfied if and only if the system is uncontrollable.  Our algorithm simply tests this condition for increasing $s$ up to $$N^*:=\begin{pmatrix}n+2|D|\\2|D|\end{pmatrix}.$$  \rmj{However, one does not know the space $S$ before the algorithm terminates, and thus the algorithm explores all the different possible spaces.} Hence, it remains to show that if Equation (\ref{eq-claim-mainthm}) has a solution for $s=1,\dots,N^*,$ then it has a solution for all $s>0.$

Let us suppose that \eqref{eq-claim-mainthm} has a solution $(S,\sigma(\cdot)),$ and define a polynomial \rmj{$p(x)$ of minimal degree such that} $$ p(x): \re^n\rightarrow \re, \quad p(x)=0 \Leftrightarrow x\in \bigcup_{d\in D}{A^dS}.$$  This polynomial has degree smaller or equal\footnote{Indeed, a linear subspace $S=\{x\in \re^n:\, c_i^Tx=0,\ 1\leq i <n\}$ can be written as the roots of a polynomial $S=\{x\in \re^n:\, \sum{(c_i^Tx)^2} =0,\ 1\leq i <n\},$ and the union of two linear subspaces $S_1,S_2$ defined accordingly by two polynomials $p_1, p_2$ is given by $S_1\bigcup S_2 =\{x\in \re^n:\, p_1(x)p_2(x)=0\}.$} to $2|D|$. Now, $b\in S$ implies that $p(b)=0,$ and by regularity of $A,$ condition (\ref{eq-claim-mainthm}) restricted to the first $N^*$ values of $s$ can be rewritten as $$p(A^{-s}b)=0\quad s=1,\dots,N^*.$$ Looking now at $p(x)$ as an element of the $N^*$-dimensional vector space  of all the polynomials of degree $2|D|$ in $n$ variables, we have that $p(x),p(A^{-1}x),\dots,p(A^{-N^*}x)\in V, $ where $V$ is the linear subspace of all the polynomials such that $p(b)=0.$ Since the application $$L_{A}:\quad p(x)\rightarrow p(A^{-1}x) $$ is a linear application on \rmj{that} $N^*$-dimensional vector space, we have $$\forall s\in[0,N^*],\ L_{A}^s(p)\in V \Leftrightarrow \forall s\geq 0,\ L_{A}^s(p)\in V.$$ That is, $$ \forall s\geq 0, p(A^{-s}x) \in V,$$ that is, $$\forall s\geq 0, A^{-s}b\in \bigcup_{d\in D}{A^dS}.$$ This last equation is equivalent to Equation \eqref{eq-claim-mainthm}, and this concludes the proof.
\end{proof}

\begin{corollary}\label{corMinimumN}
Given a controllable system \eqref{sdsystem} with $m=1,$ if the look-ahead satisfies $N \geq \small \begin{pmatrix} n+2|D|\\2|D|\end{pmatrix}, \normalsize$ one can drive the system trajectory for any initial condition $x_0$ and any switching signal $\sigma(t)$ to any arbitrary final state using the controller as in Definition \ref{def-dep}. Moreover, it is possible to reach the final state in the worst case in less than $\small \begin{pmatrix} n+2|D|\\2|D|\end{pmatrix} \normalsize$ time steps.
\end{corollary}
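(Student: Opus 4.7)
The plan is to derive this as a near-direct consequence of Theorem \ref{thAlg}, extracting from its proof the additional quantitative fact that $N^* := \binom{n+2|D|}{2|D|}$ also bounds the time at which full rank is reached. First I would observe that the proof of Theorem \ref{thAlg} actually establishes a slightly stronger statement than the one formally claimed: if the system is controllable, then for \emph{every} switching signal $\sigma(\cdot)$ the controllability matrix $C_t(A,b,D,\sigma)$ attains full rank $n$ at some time $t \leq N^*$. Indeed, were this to fail for some signal, the polynomial vanishing argument at the end of that proof (applied to the ``stalling'' subspace $S$ of maximum reachable dimension strictly less than $n$) would extend the rank deficiency from the first $N^*$ steps to all $t$, contradicting controllability by producing a witness for Equation \eqref{eq-claim-mainthm}.

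With this observation in hand, I would apply Equation \eqref{eqControllability} at time $N^*$: for any initial state $x_0$, any target $x_f$, and any switching signal $\sigma(\cdot)$, the equation
\begin{equation*}
C_{N^*}(A,b,D,\sigma)\, v_{N^*} \;=\; x_f - A^{N^*} x_0
\end{equation*}
is solvable since $C_{N^*}$ is surjective. Selecting, say, the minimum-norm pseudoinverse solution yields a vector $v_{N^*}$ whose entries are linear in the pair $(x_0, x_f)$ and parameterized by the known signal $\sigma(0{:}N^*{-}1)$. Thus the desired final state is reached in at most $N^*$ time steps by open-loop injection of these control values.

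To realize this open-loop sequence as a controller of the form prescribed by Definition \ref{def-dep}, I would exploit the look-ahead $N \geq N^*$: at time $t=0$ the controller observes $x_e(0) = (x_0, 0, \ldots, 0)$ together with the full window $\sigma(0{:}N{-}1) \supseteq \sigma(0{:}N^*{-}1)$, hence it can precompute the whole sequence $v(0), \ldots, v(N^*-1)$ upfront. At a later time $t$, the extended state $x_e(t)$ is a linear image of $x_0$ under the dynamics \eqref{sdsystem} driven by the precomputed inputs and the observed delays; therefore one can define a gain $K(\boldsymbol{\sigma}(t))$, depending only on the current look-ahead window, whose action on $x_e(t)$ reproduces the precomputed $v(t)$. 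The main obstacle in this last step is ensuring that a single such gain works uniformly in $t$ and across all initial conditions, i.e., depends only on the look-ahead window and not on $t$; this is handled by the time-invariance of the plant together with the fact that all the relevant memory of the controller is already encoded in $x_e(t)$. For general (nonzero) target states $x_f$, one either treats $x_f$ as an additional feedforward input or augments $x_e$ to carry it, which does not alter the $N^*$-step reachability conclusion.
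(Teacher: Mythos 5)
Your proposal follows essentially the same route as the paper: it extracts from the proof of Theorem \ref{thAlg} the quantitative fact that, for a controllable system, $C_t$ is full rank for every switching signal once $t \geq \binom{n+2|D|}{2|D|}$, and then concludes via Equation \eqref{eqControllability} by solving $C_{N^*} v_{N^*} = x_f - A^{N^*}x_0$. Your additional paragraph on packaging the resulting open-loop sequence as a gain $K(\boldsymbol{\sigma})$ of Definition \ref{def-dep} (with feedforward for $x_f \neq 0$) goes beyond the paper's two-line argument, which simply stops at the solvability of the linear system, and is not needed for the stated conclusion.
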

\begin{proof}
Since \eqref{sdsystem} is controllable and $m=1$, by Theorem \ref{thAlg}, $C_t$ has full rank for all $t \geq \small \begin{pmatrix} n+2|D|\\2|D|\end{pmatrix} \normalsize$ for any switching signal $\sigma(t)$, and by Equation \eqref{eqControllability} the result follows.
\end{proof}


\subsection{A polynomial time algorithm for the nihilpotent case}

For the nihilpotent case we are thus left with the problem of deciding whether a system with a controllable pair $(J_{0,k},b)$ is uncontrollable with a given set of delays $D.$  It turns out that there is a very simple combinatorial characterization of controllability in that case:

\begin{lemma}
Suppose that the matrix $A$ is a single Jordan block of size $k$ with eigenvalue zero. Then, the controllability matrix is full rank if and only if $b,Ab,A^2b,\dots A^{k-1}b$ are columns of the controllability matrix.
\end{lemma}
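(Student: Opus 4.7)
The plan is to exploit two facts: first, $A = J_{0,k}$ satisfies $A^k = 0$, so many of the columns of $C_t$ are actually zero; second, controllability of the pair $(J_{0,k}, b)$ in the classical (fixed-delay) sense already guarantees that $b, Ab, \ldots, A^{k-1}b$ are linearly independent.

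First I would recall from Definition \ref{defControllabilityMatrix} that every column of $C_t$ has the form $A^{t-t'-\sigma(t')} b$ for some $t'$ with $t - t' - \sigma(t') \geq 0$. Since $A^{k} = 0$, any such column with exponent at least $k$ is the zero vector and contributes nothing to the rank. Hence the column space of $C_t$ is spanned by a subset (with possibly repeated elements) of the finite family
\begin{equation*}
\mathcal{F} := \{b, Ab, A^2 b, \ldots, A^{k-1} b\}.
\end{equation*}

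Next I would use the standing assumption that the pair $(J_{0,k}, b)$ is controllable in the classical sense. For a Jordan block with zero eigenvalue this is equivalent to the $k$ vectors of $\mathcal{F}$ being linearly independent, hence forming a basis of $\re^k$. This is the key structural fact: $\mathcal{F}$ is a basis, so \emph{any} proper subset of $\mathcal{F}$ spans a proper subspace of $\re^k$ of dimension strictly less than $k$.

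With these two observations the equivalence is immediate. For the ``if'' direction, if all of $b, Ab, \ldots, A^{k-1}b$ appear as columns of $C_t$, then the column space contains the basis $\mathcal{F}$ and therefore equals $\re^k$, so $C_t$ has full rank. For the ``only if'' direction, suppose some $A^j b$ with $0 \leq j \leq k-1$ is missing among the columns of $C_t$. Then the columns of $C_t$ lie in the span of $\mathcal{F} \setminus \{A^j b\}$, which by linear independence of $\mathcal{F}$ has dimension at most $k-1$, so $C_t$ cannot be full rank.

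I do not expect a real obstacle: the only subtle point is keeping straight the distinction between columns that are literally the zero vector (exponents $\geq k$) and the nonzero columns drawn from $\mathcal{F}$, and then leveraging the fact that $\mathcal{F}$ is already a basis to rule out any ``accidental'' spanning by a proper subset.
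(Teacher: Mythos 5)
Your proof is correct and uses exactly the two facts the paper's own (one-line) proof invokes: classical controllability of the pair $(J_{0,k},b)$, which makes $b,Ab,\dots,A^{k-1}b$ a basis of $\re^k$, and nilpotency $A^t=0$ for $t\geq k$, which kills every other column of $C_t$. You have simply spelled out the "straightforward" argument the paper leaves implicit; nothing differs in substance.
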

\begin{proof}
This is straightforward from the fact that $(A,b)$ is controllable in the classical sense, but for all $t \geq k,$ $A^t=0.$
\end{proof}

Thus, the controllability problem for nihilpotent matrices amounts to check whether there exists a sequence of delays $\sigma:\, \n \rightarrow D$ such that the controllability matrix never contains  $b, Ab, A^2b, \dots, A^{k-1}b$. \rmj{ Stated otherwise, a system is controllable if for every switching signal, the corresponding signal of \emph{actuation times} (see Definition \ref{def-general-sdsystem}) contains $k$ '$1$' in a row.} 

In this subsection we derive an efficient (polynomial time) procedure for checking this property. 
 In fact, we are able to characterize the uncontrollable systems as follows:

\begin{theorem}
Let $A,b,D$ represent a system as in \eqref{sdsystem}. If $A$ is a nihilpotent matrix, then the system is uncontrollable, except if $A$ is (similar to) a $2\times 2$ Jordan block and the set of delays has only two different delays, of equal parity. In that case the minimal look-ahead needed to guarantee controllability is equal to $d_{max}.$
\end{theorem}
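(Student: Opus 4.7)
The proof rests on the preceding lemma: since the nilpotent case has already been reduced to a single Jordan block of size $k$, controllability is equivalent to the statement that every switching signal $\sigma$ forces the actuation signal $\tau$ to contain at least $k$ consecutive $1$'s. I will split the analysis into four regimes: (i) $k\geq 3$; (ii) $k=2$ with $|D|=2$ of different parities; (iii) $k=2$ with $|D|\geq 3$ all of the same parity; and (iv) the exceptional controllable case $k=2$, $|D|=2$, same parity.

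\textbf{Uncontrollable regimes.} In each of (i)--(iii) I will exhibit an explicit periodic $\sigma$ whose $\tau$ admits no $k$ consecutive $1$'s. Case (ii) is immediate: picking $d_0,d_1\in D$ of different parity and alternating $\sigma(2s)=d_0$, $\sigma(2s+1)=d_1$ places every actuation at a single fixed parity, precluding two adjacent $1$'s. For case (i) it suffices to handle $k=3$; using any two delays together with a period-$3$ pattern confines runs of $1$'s to length at most $2$. Case (iii) is the main obstacle. Since all delays share a parity, $\tau$ decouples into $\tau_e(j)=\tau(2j)$ and $\tau_o(j)=\tau(2j+1)$, each driven independently by a rescaled sub-problem with delays $D'=\{(d-d_{\min})/2 : d\in D\}$. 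I will design $\sigma_e,\sigma_o$ giving $\tau_e,\tau_o$ periodic of period $T$ with small supports $R_e,R_o\subset\{0,\dots,T-1\}$, and offsets chosen so that the combined support of $\tau$ leaves every consecutive pair $(u,u+1)$ with at least one $0$. Feasibility of such sparse supports in each sub-problem reduces to a covering condition $R-D'\equiv\{0,\dots,T-1\}\pmod T$, which admits solutions of density below $1/2$ precisely when $|D|\geq 3$. If $D'$ still has all delays of the same parity, I iterate the reduction on $D'$; since $\max D'$ strictly decreases at every step, the recursion terminates.

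\textbf{Controllable case and look-ahead.} For case (iv), $D=\{d_1,d_2\}$ with $d_2-d_1=2m\geq 2$, I first prove that every $\sigma$ produces a consecutive pair of $1$'s in $\tau$: if $\sigma(t)=\sigma(t+1)$ for some $t$, the actuations at $t+\sigma(t)$ and $t+1+\sigma(t+1)$ are adjacent; otherwise $\sigma$ alternates between $d_1$ and $d_2$, and a direct computation locates the adjacent pair $(d_2, d_2+1)$ in $\tau$. For the controller with look-ahead $N=d_{max}$, the visible window $\sigma(t),\dots,\sigma(t+d_{max}-1)$ at any time $t$ always contains both contributors $t_1,t_2$ to the next consecutive actuation pair $(T,T+1)$; since $B$ and $AB$ span $\mathbb{R}^2$ by controllability of the plant, the controller solves for the values $v(t_1),v(t_2)$ that steer $x(T+1)$ to any target. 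For the matching lower bound, I exhibit two switching signals that agree on $[t,t+d_{max}-2]$ but require incompatible values of $v(t)$, ruling out any causal (even nonlinear) feedback with look-ahead below $d_{max}$.
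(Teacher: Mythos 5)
Your overall framing (after the reduction to a single Jordan block of size $k$, uncontrollability is equivalent to the existence of a switching signal whose actuation-time signal $\tau$ never contains $k$ consecutive ones) matches the paper, and your cases (ii) and the first half of (iv) coincide with the paper's arguments. However, case (i) as written does not work: ``any two delays together with a period-$3$ pattern'' fails, for instance, for $D=\{0,6\}$ (more generally for two same-parity delays congruent mod $3$). There every period-$3$ choice of $\sigma$ produces actuation times $3j+i+a_i$ with $a_i\equiv 0 \pmod 3$, hence all residues mod $3$ are hit and $\tau$ is eventually identically $1$; and prescribing a period-$3$ support for $\tau$ is equally infeasible because $t$ and $t+6$ lie in the same residue class. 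This is precisely the regime where the easy periodic tricks break down; the paper instead constructs the uncontrollable signal \emph{greedily} (take $d_1$ unless that would create $k$ consecutive ones, else $d_2$) and proves by contradiction that this never produces $k$ ones in a row --- a non-trivial argument that your sketch does not replace. Case (iii) is likewise left at the level of a plan: the assertion that the covering condition ``admits solutions of density below $1/2$ precisely when $|D|\geq 3$'' is the entire difficulty and is not proved, whereas the paper settles it with explicit periodic actuation signals, $\tau=(001(01)^{d_2/2-1})^*$ when $d_3\leq 2d_2$ and $\tau=001((01)^{(d_3-d_2-2)/2})^*$ when $d_3>2d_2$ (after normalizing $d_1=0$), together with a verification that compatible switching signals exist.

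The look-ahead claim is also not established. Your upper bound rests on the statement that the window $\sigma(t),\dots,\sigma(t+d_{max}-1)$ ``always contains both contributors'' of the next adjacent actuation pair; this is false when $\min D\leq 1$: with $D=\{d_1,d_2\}$, the contributors to actuations at $T$ and $T+1$ can be $t_1=T-d_2$ and $t_2=T+1-d_1$, which are $d_2-d_1+1$ time steps apart, exceeding the visible horizon $d_{max}-1=d_2-1$ whenever $d_1\in\{0,1\}$ (e.g.\ $D=\{0,4\}$ or $\{1,5\}$); a correct argument must instead exploit the controller's memory of its own past outputs and re-planning rather than simultaneous visibility of both commitments. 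The matching lower bound (two signals agreeing on a window of length $d_{max}-1$ yet forcing incompatible $v(t)$) is only announced, not constructed. So both halves of ``minimal look-ahead $=d_{max}$'' remain open in your write-up, in addition to the gaps in cases (i) and (iii).
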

\begin{proof}
We first show that the system is controllable in the particular case where $A$ is similar to a $2 \times 2$ Jordan block and \rmj{both} delays in $D$ have the same parity. In all the other cases we exhibit an uncontrollable switching signal, namely a switching signal that makes the system uncontrollable.

We recall that if $A$ is (similar to) a single Jordan block of size $k$ with eigenvalue zero, the controllability matrix is of rank $k$ if and only if it contains $b,Ab,\dots, A^{k-1}b$.

\rmj{This implies that if $A$ is $2\times 2$ and one wants to build an uncontrollable switching signal,} there cannot be two consecutive equal values for the switching delay.  Indeed, if $\sigma(t)=\sigma(t+1)=d,$ then, at time $t+1+d, $ the controllability matrix contains both vectors $b$ and $Ab.$  This makes the controllability matrix of rank two since the pair $A,b$ is controllable.  Thus, the only potentially uncontrollable switching signal is of the shape
$$ \sigma(t) = d_1d_2d_1d_2\dots $$  However, if $D=\{d_1,d_2\},$ with both delays of the same parity (we take $d_1<d_2$ without loss of generality), at time $t=2+d_2,$ the controllability matrix contains $b$ (because $\sigma(2)=d_2$) and $Ab$ (because $\sigma(t+d_2-1-d_1)=d_1$), and the signal is not uncontrollable.

If $A$ is $2\times 2$ and there is an odd delay $d_1$ and an even delay $d_2,$ then the signal $$\sigma(t) = d_1d_2d_1d_2\dots$$ is uncontrollable.  Indeed, for each time $t,$ the controllability matrix receives the vector $b$ at an even time $t+\sigma(t).$ As a consequence, there are no two consecutive times $t,t+1$ at which the controllability matrix contains the vector $b,$ and thus it never contains the pair $b,Ab.$

If $A$ is $2\times 2$ and there are (at least) three different even delays: $D=\{d_1,d_2,d_3,\dots\},\ d_1,d_2,d_3= 0\mbox{ mod }2,$ there is also an uncontrollable switching signal.  For the sake of clarity, we first suppose that $d_1=0.$ This does not incur a loss of generality, because the set of delays $D'=\{0,d_2-d_1,d_3-d_1\}$ is controllable if and only if the initial set of delays is. Indeed, any uncontrollable switching signal on $D$ (resp. $D'$) immediately translates to an uncontrollable signal on $D'$ (resp. $D$) by taking the corresponding delay in the other set at every step (the signal of actuation times is simply the same, shifted by $d_1$ values).\\ Now, we suppose $d_1=0$ and we define an uncontrollable periodic switching signal depending only on the ratio between $d_2$ and $d_3.$\\
If $d_3\leq 2d_2,$ take the signal of actuation times equal to $$
\tau = (001(01)^{d_2/2-1})^*,$$ where $(w)^l$  means the concatenation of $l$ times the word \rmj{$w,$} and $w^*$ means the infinite repetition of the word $w.$  Thus, $\tau$ is a periodic signal of period $d_2+1.$  The signal $\tau$ is uncontrollable because there are never two consecutive ones. We claim that there is a switching signal $\sigma(t)$ corresponding to this signal of actuation times, that is, such that $\forall t, \tau(t+\sigma(t))=1.$  Since $\tau$ is periodic, it is sufficient to prove it for the first period. The important property of $\tau$ is that for the first period, all odd times have value one (except the first one), for the second period, all even times have value one (except the first one), etc.\\  For all the times $t$ such that $\tau(t)=1,$ one can take $\sigma(t)=d_1=0.$ For $t=1,$ one can take $\sigma(1)=d_2.$ Indeed, one sees in the equation above that the $1+d_2$-th value of $\tau$ (the last digit of the period) is equal to one.  Now, take $\sigma(2)=d_3,$ and for all other even times, take $\sigma(t)=d_2.$  For this choice, $t+\sigma(t)$ will be an even number in the second period (but not its very first time), thus equal to one (recall that $d_2<d_3\leq 2d_2$).\\
If $d_3>2d_2,$ take $$\tau = 001((01)^{(d_3-d_2-2)/2})^*.$$
Now the period is equal to $d_3-d_2+1.$  Again, in the first period all the odd times (but the first one) are equal to one, in the second period all the even times (but the first one), etc.\rmj{, and we focus on the first period.}  Of course, we take $\sigma(t)=d_1=0$ for the odd times, except for $t=1.$  For $t=1,$ we take $\sigma(t)=d_2.$ Since $1+d_2\leq d_3-d_2 +1,$ this is an odd time in the first period, for which the signal $\tau$ is equal to one.  For the even times $2\leq t\leq d_3-2d_2 +2,$ take $\sigma(t)=d_3.$ Since $d_3+2\leq t+d_3 \leq 2 (d_3-d_2 +1)$ is an even time in the second period (but not the first time of the second period), the signal $\tau$ is equal to one. \\Now, for the even times $ t> d_3-2d_2 +2,$ take $\sigma(t)=d_2.$  Since $t+d_2>d_3-d_2+2,$ this time is an even time in the second period (and not the very first one), and thus again the signal $\tau$ is equal to one.

Finally, if $A$ has dimension larger than $2,$ we claim that one can build the uncontrollable signal starting at $t=1$ and
defining $\sigma$ in a greedy way for increasing $t:$ always take
$\sigma(t)=d_1,$ except if this makes the signal controllable, in which
case take $\sigma(t)=d_2.$  More precisely, the algorithm builds a signal of {actuation times} $\tau(t)$ in the following way: start at $t=1$ with the zero signal (i.e., $\forall t\geq 1, \tau(t)=0$), and define $\sigma(t)=d_1$ (and thus $\tau(t+d_1)=1$) if it does not incur a sequence of $k$ consecutive ones in $\tau.$  In the opposite case, define $\sigma(t)=d_2$ (and thus $\tau(t+d_2)=1$).\\
This algorithm builds a signal of actuation times that never contains $k$ ones in a row. We prove this by contradiction: suppose that at some time $t$ the above described algorithm creates a sequence of $k$ consecutive ones in the signal $\tau,$ and let us take the first time $t$ at which this happens. Then, the consecutive sequence occurs at the places $\tau(t+d_2-k+1)\dots \tau(t+d_2)$: {this is because by design, the algorithm does not chose $d_1$ if it creates a sequence of consecutive ones, so the problem must occur when the algorithm uses the delay $d_2.$ Moreover, looking to the signal $\tau$ as it is when the problem occurs, we have $\tau(t')=0$ for all $t'> t+d_2.$} Thus, we have $\tau(t+d_2)=\tau(t+d_2-1)=\tau(t+d_2-2)=\dots =\tau(t+d_2-k+1)=1.$ This implies in turn that $\tau(t+d_1)=\tau(t+d_1-1)=\dots =\tau(t+d_1-k+1)=0$ (because if $\tau(t+d_1-k')$ was equal to one for any value $0\leq k' \leq k-1$, the algorithm would have picked $\sigma (t-k')=d_1$ without doing any harm).  However, this is impossible, because $\sigma(t-1)$ can safely be put to $d_1$ in that case; and we have reached a contradiction.
\end{proof}

\begin{corollary}
If $A$ is a nihilpotent matrix, there is a polynomial time algorithm to decide whether System \eqref{sdsystem} is controllable with arbitrarily large look-ahead.
\end{corollary}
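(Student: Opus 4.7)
My plan is to observe that the preceding theorem gives a complete combinatorial characterization of controllability in the nihilpotent case, so the decision algorithm reduces to checking a constant number of easily verifiable conditions on $A$, $b$, and $D$. The algorithm therefore does essentially no search; it only performs standard linear-algebraic preprocessing followed by inspection of the set $D$.

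Concretely, I would proceed as follows. First, compute the Jordan structure of $A$ restricted to the kernel of powers of $A$ (since $A$ is nihilpotent, the whole of $A$); this is doable in polynomial time, for instance by iterating $\dim \ker A^i$ for $i=1,\dots,n$. By the first lemma of the nihilpotent subsection, if there is more than one Jordan block, output ``uncontrollable.'' Otherwise there is a single Jordan block of some size $k$, and we must first check classical controllability of the pair $(J_{0,k},b)$ (equivalent to requiring $b,Ab,\dots,A^{k-1}b$ to be linearly independent, verifiable in polynomial time). If the pair is not classically controllable, output ``uncontrollable.'' If it is, invoke the previous theorem: the system is controllable with arbitrarily large look-ahead if and only if $k=2$, $|D|=2$, and the two delays in $D$ have the same parity. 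All three of these tests are clearly polynomial (indeed, essentially constant) time once $k$ is known.

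Finally, I would note briefly that the correctness of the procedure is immediate from the characterization theorem: all four listed checks together cover exactly the conditions under which the cited theorem declares the system controllable, and in every other case it guarantees the existence of an uncontrollable switching signal. The only step that is not completely trivial algorithmically is computing $k$ and verifying classical controllability of $(J_{0,k},b)$, but both reduce to standard rank computations and thus run in time polynomial in $n$. Since $|D|\leq d_{\max}+1$, the parity check is $O(1)$ given $D$. Hence the overall procedure is polynomial in the bit-size of the input, which establishes the corollary.
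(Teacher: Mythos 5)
Your proposal is correct and follows essentially the same route as the paper: the corollary is an immediate consequence of the preceding characterization theorem, and (as the paper's remark notes) checking whether $A$ is similar to a $2\times 2$ Jordan block with eigenvalue zero and whether $D$ consists of exactly two delays of equal parity amounts to a few rank/parity tests, hence polynomial (essentially constant) time. Your extra explicit steps (computing the Jordan structure and verifying classical controllability of $(J_{0,k},b)$) are harmless refinements of the same argument, the latter being already covered by the paper's standing assumption that $(A,B)$ is controllable.
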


\begin{remark}
In practice our algorithm for nihilpotent matrices is a constant time algorithm: we basically provide a characterization of the pathological uncontrollable situations, and the algorithm can check these conditions in essentially constant time, independently of the state space dimension or the precise values of the delays.
\end{remark}

The following corollary is directly implied by the above corollary and Theorem \ref{thAlg}.
\begin{corollary}
Given any $A,b,D$ there is an algorithm to decide in finite time whether the system is controllable with arbitrarily large look-ahead.
\end{corollary}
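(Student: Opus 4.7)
The plan is to combine the partial results established in the subsection on the regular case and the nihilpotent case via the structural decomposition lemmas given earlier. First I would compute the Jordan form of $A,$ which can be done in finite time (it requires only finite-precision algebraic computations over the field generated by the entries of $A$). I would then branch according to the structure of the eigenvalue zero of $A.$

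If $A$ admits more than one Jordan block associated with the eigenvalue zero, then by the first lemma of the subsection the system is immediately declared uncontrollable, and the algorithm outputs \textbf{NO}. Otherwise, either $A$ has no Jordan block at zero (i.e.\ $A$ is regular), or it has a unique Jordan block $J_{0,k}$ at zero, in which case one applies the similarity transformation $T$ from the second lemma to put the pair $(A,b)$ into block-diagonal form:
\[
TAT^{-1}=\begin{pmatrix}J_{0,k}&0\\0&A'\end{pmatrix},\qquad Tb=\begin{pmatrix}b_0\\b'\end{pmatrix},
\]
so that the controllability of $(A,b)$ with delay set $D$ reduces to the controllability of the regular pair $(A',b')$ and of the nihilpotent pair $(J_{0,k},b_0),$ each with delay set $D.$

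For the regular subproblem I invoke Algorithm~\ref{algo-controllability}, whose finite-time termination and correctness are guaranteed by Theorem~\ref{thAlg}. For the nihilpotent subproblem I invoke the polynomial-time algorithm of the previous corollary, which merely consists in checking whether $A$ is similar to a $2\times 2$ Jordan block together with a parity condition on the delays in $D.$ By the second lemma, the original system is controllable with arbitrarily large look-ahead if and only if both subsystems are controllable; so the overall algorithm outputs \textbf{YES} precisely when both subroutines output \textbf{YES}.

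Since each step above terminates in finite time (the Jordan decomposition is finite, Algorithm~\ref{algo-controllability} halts within $\binom{n+2|D|}{2|D|}$ iterations, and the nihilpotent test is essentially constant-time), the composite procedure terminates in finite time, proving the corollary. The only delicate point is that the decomposition lemma was stated under the assumption that $A$ has \emph{exactly} one Jordan block at zero; this is why the first branch of the algorithm (detection of multiple zero blocks) must be handled separately before attempting the reduction, and this is the step I would be most careful about when writing out the full argument.
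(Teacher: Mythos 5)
Your proposal is correct and follows essentially the same route the paper intends: the paper states this corollary as a direct consequence of the regular/nihilpotent split lemmas, Theorem \ref{thAlg} (Algorithm \ref{algo-controllability} for the regular part), and the nihilpotent-case characterization, which is exactly the case analysis you spell out. Your explicit handling of the multiple-zero-Jordan-block branch before applying the decomposition is the right precaution and matches the paper's first lemma of that subsection.
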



\section{\ad{Controller design with fixed look-ahead}} \label{secDesignDI}

When the controller cannot chose the look-ahead the design problem is trickier. In particular, when the look-ahead is zero, one has to design a single controller that would work for any possible switching signal. In this section we make some initial steps to tackle the design problem in the delay independent case. We first provide an example of a very simple scalar system where, according to the dynamics of the plant, the system can be not-stabilizable, stabilizable with memory, or stabilizable without memory.

\begin{ex}\label{ex-indep}
In this example \rmj{we consider the simplest nontrivial situation, namely an arbitrary one dimensional linear system with delays in $ D=\{0,1 \}.$} We assume that the controller stores the previous value of $x(t)$ instead of the previous value of $v(t).$  We make this choice for the sake of clarity, in order to have simpler matrices in the equivalent switching system. It is easy to check that in this slightly modified setting, one can still apply the trick of Theorem \ref{thm-ssreduction-independent} and obtain the following three-dimensional switching system.
\begin{equation}\label{eq-ssreduction-independent}\begin{pmatrix}x({t})\\ x({t+1})\\ u_1(t+1)\end{pmatrix}= M_{\sigma(t)} \begin{pmatrix}x({t-1})\\ x(t)\\ u_1(t)\end{pmatrix}.\end{equation}

where
$$ M_0=\begin{pmatrix}0& 1 &0\\ bk_1 &a+ bk_2& 1\\0& 0 &0 \end{pmatrix}, \quad M_1=\begin{pmatrix}0 &1& 0\\ 0& a& 1\\ bk_1 &bk_2 &0\end{pmatrix}. $$

The controller stores the value of $x({t-1})$ for one iteration. It then makes use of it and of the current value $x(t)$ for computing its output $v(t).$ If the delay is 1 $v(t)$ is put "in the queue" (third entry of the vector), while if the delay is zero it is directly added in the plant in order to compute $x({t+1}).$ Let us fix $b=1.$ Depending on the other values, we obtain the following cases:
\begin{itemize}
\item For $a>3,$ system (\ref{eq-ssreduction-independent}) is unstable, whatever controller $K$ is applied. This can be seen by observing that in this case, $trace(M_1)\geq 3,$ hence $M_1$ is unstable. By Theorem \ref{thm-ssreduction-independent}, the corresponding \switcheddelay{} is uncontrollable.
\item For $a<1$ the system is clearly stabilizable without any controller ($k_1=k_2=0$), since it is the case for the autonomous stable dynamical system.
\item For a=1.1 the system is controllable without using memory (i.e. $k_1=0$), e.g. by taking $k_2=-0.5,$ and the switching system (\ref{eq-ssreduction-independent}) is stable.
\item Finally, for $a=2$ the system is still controllable, but in this case one needs $k_1\neq 0$: indeed, if $k_1=0$ one can restrict himself to the $2 \times 2$ lower-right corner of the matrices, and this subsystem is unstable because $trace(M_0)\geq 2$. On the other hand stabilizing controllers exist with $k_1\neq 0,$ as for instance $k_1=0.4,k_2=-1.5.$
\end{itemize}
In the last two cases, in order to check for the validity of the proposed controller, one can check that the joint spectral radius of the set $\{M_0,M_1\}$ corresponding to the proposed controller is smaller than one, for instance by making use of the JSR toolbox, available on the web \cite{VHJ_HSCC14}.
\end{ex}

We now present an example for which there is a nonlinear controller which works better than any linear controller: an asset of our nonlinear controller is that it can detect at time $t+1$ the switching signal $\sigma(t)$ using the state-space measure $x(t+1)$ as a proxy, and make use of it in order to improve the next control signals.

\begin{ex}\label{ex-nonlin}
The system in this example is a rotation of an angle $\alpha:$ $$ A=\begin{pmatrix}
	\ca&-\sa\\ \sa & \ca
\end{pmatrix},\quad B = \begin{pmatrix}
	1\\ 0
\end{pmatrix}.$$ The set of delays is $D=\{0,1\},$ and the controller is delay-independent.
We show in the next theorem that this system is stabilizable with a rate of decay equal to $0.69\dots,$ but no linear controller can achieve a rate smaller than $0.755\dots.$
\end{ex}

\begin{theorem}
For the system from Example \ref{ex-nonlin}, there are values of $\alpha$ such that if the system is stabilized with a linear controller, one cannot guarantee the existence of a constant $K$ such that $$x(t)\leq K \rho^t $$ for any $\rho <0.755\dots.$   However, there is a nonlinear controller that allows such a guarantee with $\rho=0.69\dots.$
%
\end{theorem}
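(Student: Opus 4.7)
The two inequalities require different techniques; I would tackle them separately.

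For the nonlinear upper bound, the key observation is that the discrepancy $x(t+1) - Ax(t)$ is a known linear function of the unknown contributions $v(t)\delta_{\sigma(t),0}$ and $v(t-1)\delta_{\sigma(t-1),1}$, so the delay-independent controller can reconstruct $\sigma(t)$ from $x(t+1)$ one step after the fact. I would exploit this \emph{causal hindsight} by organizing the control in two-step blocks: at the first step, the controller sends a nonzero probe $v(t)$ chosen so that the two candidate states $x(t+1)$ (one per value of $\sigma(t)$) both have norm at most $r_1 \|x(t)\|;$ at the second step, the controller has identified $\sigma(t)$ and the queued signal, and applies a tailored $v(t+1)$ that contracts the state by a factor $r_2$ regardless of $\sigma(t+1).$ A direct trigonometric optimization, leveraging $\|Ax\|=\|x\|$ because $A$ is a rotation, yields $(r_1 r_2)^{1/2} = 0.69\ldots$ for a suitable $\alpha.$ Global stability follows by iterating the block-wise bound.

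For the linear lower bound, I would apply Proposition~\ref{prop-delay-independent-cl} and Theorem~\ref{thm-ssreduction-independent} to represent, for any linear controller $K\in\re^{1\times 3},$ the closed-loop dynamics as an arbitrary switching linear system with two matrices $M_0(K),M_1(K)\in\re^{4\times 4}$ that depend affinely on $K.$ The attainable rate is exactly $\hat{\rho}(\{M_0(K),M_1(K)\}),$ and the goal becomes $\inf_K \hat{\rho}(\{M_0(K),M_1(K)\}) \geq 0.755\ldots{}$ for the chosen $\alpha.$ I would invoke the standard finite-product lower bound $\hat{\rho} \geq \rho(M_{i_1}\cdots M_{i_\ell})^{1/\ell}$ applied to a short list of words in $\{0,1\}^*,$ combined with the observation that certain symmetric invariants (trace, determinant) of these products are independent of $K.$ Eliminating $K$ from the resulting polynomial inequalities yields an inequality in $\alpha$ alone that forces the announced bound.

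The main obstacle is the linear lower bound: one must certify that \emph{every} choice of $K$ leaves at least one short product with spectral radius above $0.755\ldots,$ which requires identifying an invariant of the matrix products that does not depend on the control parameters. I expect the tight word to be one of $0$, $1$, $01$ or $001$, and the critical $\alpha$ to be where two such invariants collide. The nonlinear construction, by contrast, reduces to a finite-dimensional geometric optimization once the hindsight observation is formalized; the remaining subtlety is to check that the resulting piecewise-affine feedback is globally stabilizing, which I would do by exhibiting a common Lyapunov function for the four-dimensional closed-loop dynamics built from the Theorem~\ref{thm-ssreduction-independent} reduction.
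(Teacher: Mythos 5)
Your plan for the nonlinear part starts from the same observation as the paper (the controller can recover $\sigma(t)$ at time $t+1$ from $x(t+1)$), but the two-step block you build on it has a genuine gap, and it is exactly where the difficulty of the construction lies. In the branch $\sigma(t)=0,\ \sigma(t+1)=1$, your ``tailored'' $v(t+1)$ never reaches the plant inside the block: $x(t+2)=Ax(t+1)$ has the same norm as $x(t+1)$ (the plant is a rotation), and $v(t+1)$ is still pending and contaminates the next block, so block boundaries do not decouple. In the branch $\sigma(t)=1,\ \sigma(t+1)=1$, the end-of-block state is $A^{2}x(t)+Bv(t)$, fixed by the probe alone; since $B=(1,0)^{T}$ the best one can achieve is to cancel the horizontal component, leaving the vertical component of $A^{2}x(t)$, which is small only if $x(t)$ is already confined to a narrow cone. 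Hence a contraction ``regardless of $\sigma(t+1)$'' cannot come from a generic two-step probe-then-correct scheme; one needs an invariant region preserved by the closed loop and a single control value that does useful work whether it arrives after one or after two steps. This is what the paper constructs: the state is kept (up to a controlled error $r$) on the line $L$ of slope $-\tan(3\alpha/2)$, the value $v(t)=\|x(t)-r\|\cos(\alpha/2)$ is chosen so that it maps either $Ax(t)$ or $A^{2}x(t)$ (which sit symmetrically at angles $\mp\alpha/2$) essentially onto the vertical axis, and a second (and, when the delays force it, a wasted third) step remaps onto $L$; the contraction is $\approx\sin(\alpha/2)/\sin(3\alpha/2)\to 1/3$ per block of three steps, which is precisely where $\rho=(1/3)^{1/3}=0.69\dots$ comes from. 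Your figure $(r_1r_2)^{1/2}=0.69\dots$ is asserted rather than derived, and the proposed certification via a common Lyapunov function on the Theorem~\ref{thm-ssreduction-independent} reduction does not apply, since that reduction is for linear feedback while your controller is nonlinear.

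For the linear lower bound your strategy is the paper's in spirit and does go through, but more simply than you anticipate: words of length one suffice and no critical $\alpha$ is involved. With the closed loop written as a switching pair (the paper uses the $3\times3$ realization \eqref{eq-thm-nonlin-matr}), the product of the two nonzero eigenvalues of $M_0$ equals $1+k_1\cos\alpha-k_2\sin\alpha$, while $\det M_1=k_2\sin\alpha-k_1\cos\alpha$; imposing spectral radius at most $\rho$ on each matrix forces $1-\rho^{2}\le k_2\sin\alpha-k_1\cos\alpha\le\rho^{3}$, hence $\rho^{2}+\rho^{3}\ge 1$ and $\rho\ge 0.755\dots$ for every $\alpha$. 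Note that the elimination works not because trace or determinant are independent of $K$ (they are not), but because the \emph{same} $K$-dependent quantity appears in both constraints with opposite inequalities. If you work instead with the $4\times4$ realization including controller memory, you would still need to verify that the extra gain does not destroy this cancellation, a step your plan leaves open.
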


\begin{proof}
We suppose $\alpha$ relatively small.  The nonlinear controller that we propose relies on the fact that if a point is close to the line of slope $-\alpha\frac{3}{2}$ which contains the origin, then, within three steps, it can be mapped on a new point close to this line, whose norm is approximately three times smaller than the initial vector.  We denote this line by $L.$  Then, by iterating this argument, one gets a controller whose rate of convergence towards the origin is close to $(1/3)^{1/3}\approx .69\dots.$ We will then show that no linear controller can reach a rate of convergence smaller than $.7555\dots .$ We {\bf claim} that if the point $x(t)$ satisfies $x=(w/\tan{(-3\alpha/2)},w)+r,$ for some real number $w,$ and some vector $r$ such that $||r||\leq 2|w|\sin{(\alpha)},$ then, one can chose a control sequence $v(t),v(t+1)$ (and potentially $v(t+2)=0$) such that $x(t+2)=(w'/\tan{(-3\alpha/2)},w')+r',$ (or $x(t+3)=(w'/\tan{(-3\alpha/2)},w')+r'$) for some real number $w$ such that $|w'|\leq |w|/3$ and some vector $r'$ such that $||r'||\leq 2 |w'|\sin{(\alpha)}.$  The idea is that after one step, $x(t)$ will rotate very close to the $x$-axis (at an angle approximatively $\alpha/2$), and the controller will then project it close to the $y$-axis, approximatively multiplying its norm by $\sin{(\alpha/2)}.$  One will then add a horizontal component to the new vector in order to remap it on the line $L.$  Due to the possible delay, this two step method cannot be implemented in less than three actual steps.  Also, one must pay attention to the errors incurred by the unpredictable delays occuring in the process (represented by the vector $r$).  We now describe the control signal, and compute the corresponding evolution of the system.

We suppose that $x(t)-r$ (i.e., the point on $L$ closest to $x(t)$) lies in the second orthant; the case for the fourth orthant is exactly symmetrical.
We define the actuation signal $v(t)=||x(t)-r||\cos{(\alpha/2)}.$ Remark that $v(t)$ is the control signal that would exactly project the point $Ax(t),$ and also the point $A^2x(t),$ on the $x$-axis if $x(t)$ was exactly on the line.  Now, depending on the delay $\sigma(t),$ two cases can occur:
\begin{eqnarray}\label{eq-ex-1}
x(t+1)&= &\begin{pmatrix}
0\\	||x(t)-r||\sin{(\alpha/2)}
\end{pmatrix} +Ar\quad \mbox{if } \sigma(t)=0,\\\label{eq-ex-2}
x(t+1)&= &\begin{pmatrix}
-||x(t)-r||\cos{(\alpha/2)}\\	||x(t)-r||\sin{(\alpha/2)}
\end{pmatrix} +Ar,\quad \mbox{if } \sigma(t)=1.
\end{eqnarray}
\normalsize
Now, we define $\hat x(t+2)$ to be the vector at time $t+2$ if $u(t+1)=0$. That is, in the first case (i.e. $\sigma(t)=0$) we define
\begin{equation}\label{eq-ex-hat1}
\hat x(t+2)=Ax(t+1),
\end{equation}
in the second case (i.e. $\sigma(t)=1$),
\begin{equation}\label{eq-ex-hat2}
\hat x(t+2)=Ax(t+1)+v(t).
\end{equation}
We call $w'=\mbox{ord}(\hat x(t+2))$ and we define $v(t+1)=-\mbox{abs}(\hat x(t+2))-w'/\tan{(3\alpha/2)}$, where \lq $\mbox{ord}$\rq{} and \lq $\mbox{abs}$\rq{} denote respectively the second and first coordinate of a vector. This is the control signal which, if applied at time $t+2,$ maps $x(t+2)$ onto the line $L,$ on the point $(w'/\tan{(-3\alpha/2),w')}.$ We will prove below that if $\sigma(t+1)=0,$ then $x(t+2)$ satisfies the claim.  Now, if $x(t+2)$ does not satisfy the claim, we define the control signal $v(t+2)=0,$ and we will show that in this case $x(t+3)$ satisfies the claim.

Let us first suppose that $\sigma(t+1)=0.$ Then, the point $x(t+2)$ is exactly on the line $L,$ at $(-w'/\tan{(3\alpha/2)},w'),$ where $w'=\mbox{ord}(\hat x(t+2)).$
We can compute $w'$ thanks to equations (\ref{eq-ex-1}-\ref{eq-ex-hat2}) for both values of the delay $\sigma(t).$
\begin{eqnarray}\label{eq-ex-1bis}
w'&= &||x(t)-r||\sin{(\alpha/2)}\ca + \mbox{ord}(A^2r)\quad \mbox{if } \sigma(t)=0,\\ \label{eq-ex-2bis}
w'&= &-||x(t)-r||\sin{(\alpha/2)}+ \mbox{ord}(A^2r)\quad \mbox{if } \sigma(t)=1.
\end{eqnarray}

Since $||x(t)-r||=w/\sin{(3\alpha/2)}$ and $||r||\leq 2 w \sa,$ we have that $|w'|<|w|(1/3+\epsilon),$ where $\epsilon$ can be taken arbitrarily small (by taking a sufficiently small $\alpha$).\\
We now analyze the case where $\sigma(t+1)=1.$ In this case, $x(t+3)=A\hat x(t+2)+v(t+1)=\hat x(t+2)+v(t+1) + (A-I)\hat x(t+2).$  We have just shown above that $\hat x(t+2)+v(t+1) $ is a vector $(-w'/\tan{(3\alpha/2)},w')\in L$ satisfying $|w'|<|w|(1/3+\epsilon),$ so, we only have to show that $r'=(A-I)\hat x(t+2)$ satisfies $||r'||<2 \sa w',$ which we show by the submultiplicativity of the Euclidean norm: First, we have $||(A-I)||\approx \alpha$ for small values of $\alpha.$ Second, if $\sigma(t)=0,$ we have (see again (\ref{eq-ex-1}-\ref{eq-ex-hat2}))
$$||\hat x(t+2)||=||x(t+1)||\leq ||x(t)-r||\sin{(\alpha/2)}+ ||Ar||< 2|w'|,$$
\normalsize
and if $\sigma(t)=1,$ we have
$$||\hat x(t+2)||\leq ||x(t)-r||\sin{(\alpha/2)}+ ||A^2r||< 2|w'|,$$
\normalsize
which proves the claim (for $\alpha$ sufficiently small).\\
In order to implement the controller, one still has to fulfill the inductive hypothesis of the claim at the first step, that is, obtain a point which is close to the line $L.$ This is easily obtained by defining $v(0)=-\mbox{abs}(Ax(0))-\mbox{ord}(Ax(0))/\tan{(3\alpha/2)}$ (this is exactly identical to the way we compute $u(t+1)$ from $\hat x(t+2)$ above, and we skip the details).

It remains to show that no linear controller can achieve a decay rate better than $0.75.$ {It is not difficult to see that using a linear controller, the closed loop system can be modeled as the following switching system (see Proposition 1 in \cite{JungersCDC2012} for details):}
\begin{equation}
\{A_0,A_1\} =  \left\{ \begin{pmatrix}
	\ca +k_1& -\sa + k_2 & 1\\ \sa & \ca & 0 \\ 0&0&0
\end{pmatrix}, \begin{pmatrix}
	\ca & -\sa & 1\\ \sa & \ca & 0 \\ k_1 & k_2 & 0	
\end{pmatrix}\right \}. 
\label{eq-thm-nonlin-matr}
\end{equation}
\normalsize
We show that these two matrices cannot both have a spectral radius smaller than $0.755, $ and thus no linear controller can ensure a convergence to zero at a rate smaller than $0.755.$
The characteristic polynomial of the first matrix is $$\lambda(\lambda^2- (k_1+2\ca) \lambda +1 +k_1 \ca -k_2 \sa), $$
%
and thus its two nonzero eigenvalues satisfy $$\lambda_1\lambda_2= 1 +k_1 \ca -k_2 \sa. $$
Thus, $|\lambda_1\lambda_2|\leq \rho^2$ implies
\begin{eqnarray}
1 +k_1 \ca -k_2 \sa \leq \rho^2\ \Leftrightarrow\ k_2 \sa -k_1 \ca \geq 1- \rho^2. \label{eq-cond-stab}
\end{eqnarray}
Now, the determinant of $A_1$ is equal to $k_2\sa - k_1 \ca,$ and since the determinant is the product of the eigenvalues, the condition that the norm of the eigenvalues of $A_1$ have modulus smaller than $\rho$ implies \begin{equation}
k_2\sa - k_1 \ca \leq \rho^3.
\label{eq-cond-stab2}
\end{equation}
Putting (\ref{eq-cond-stab}) and (\ref{eq-cond-stab2}) together yields $ \rho^2 +\rho^3 \geq 1, $ and thus $ \rho \geq 0.755, $ which concludes the proof.

\end{proof}

The above theorem implies that restricting oneself to linear controllers leads to conservativeness. This is, to our knowledge, the first example of that kind. It shows that, contrary to what has been done until now in the switching systems literature, one should not restrict his search to linear controllers in order to avoid conservativeness.


\section{Conclusion}\label{secConclusions}

Motivated by applications in wireless control networks, we start in this paper a research line where we introduce and analyze a new model of linear time invariant (LTI) systems with switching delays. Switching delays have indeed attracted attention in recent years in the control community, because they appear naturally in several modern technological systems, like WCNs, computer networks, etc.

These systems can be represented as particular switching systems, but in terms of difficulty, they seem to lie halfway between easy LTI systems (for which closed form formulas or efficient algorithms are available for most control questions) and switching systems (for which even the simplest questions are intractable, or bound to conservative solutions). Contrary to what has been done in the recent literature, our goal was not to apply or adapt switching systems-oriented techniques to our systems, but rather to develop methods that would take advantage of the algebraic structure of these particular switching systems.

We have provided an analytical characterization of controllable systems when enough delays are known in advance. This condition is more complex than for LTI systems, but yet, allows for an algebraic characterization of controllable systems. On the contrary, we have shown that in general, unlike LTI systems, ours can need nonlinear strategies in order to be stabilized, if there is a limit on the number of forthcoming delays that the controller can know in advance.

We believe that this type of results will be useful in forthcoming applications, where the design of the controller and of the communication protocol (i.e. the routing policy) are tangled, as for instance in a multi-hop networked control system. On top of their practical usefulness, we believe that the questions we investigate are promising for theoretical research, as they necessitate new theoretical ideas in order to answer classical control questions, but yet, they seem to allow for closed form characterizations and algorithmic decision procedures (as opposed to Lyapunov-like methods, which are prevalent for switching systems and are often bound to conservativeness and lack of structure).

\rmj{We have introduced a model here which slightly differs from recent works on systems with switching delays and WCNs in that the delay is determined at the time where the controller emits a control packet, rather than at the time where the plant receives the control packet. This seemed to us closer to practical situations.  We believe that our approach is transposable to different models with switching delays, and that similar phenomena occur, independently of the details of the model.  We leave this question for future research.}

We raise several open problems, for instance: Is the stability analysis for a delay-independent system (i.e. $N=0$) easier than in the general case (which we proved NP-hard)?  Is there a more efficient version of Algorithm 1, which would allow to decide controllability of a delay-dependent system (with sufficient look-ahead) in polynomial time?  When the look-ahead is bounded (or even zero), how to efficiently design a non-linear controller? This task seems hard, especially in view of the fact that the controller can be quite intricate (like in Example \ref{ex-nonlin}). We believe that many more interesting results remain to be unravelled about linear systems with switching delays.


\def\cprime{$'$} \newcommand{\noopsort}[1]{} \newcommand{\singleletter}[1]{#1}


\end{document}